\numberwithin{equation}{section}
\theoremstyle{plain}
\newtheorem{cor}[equation]{Corollary}
\newtheorem{lemma}[equation]{Lemma}
\newtheorem{proposition}[equation]{Proposition}
\newtheorem{theorem}[equation]{Theorem}
\theoremstyle{definition}
\newcommand{\dlabel}[1]{\ifmmode \text{\ttfamily \upshape [#1] } \else
{\ttfamily \upshape [#1] }\fi \label{#1}}
\newcommand{\Z}{\operatorname{Z} }
\newcommand{\gen}[1]{\left < #1 \right >}
\newcommand{\normal}{\vartriangleleft}
\newcommand{\normaleq}{\trianglelefteq}
\begin{document}
\setcounter{page}{1}
\title[Finite $p$-groups of conjugate type $\{ 1, p^3 \}$]{ Finite $p$-groups of conjugate type $\{ 1, p^3 \}$}

\author{Tushar Kanta Naik}
\address{Harish-Chandra Research Institute 
         Chhatnag Road, Jhunsi, 
          Allahabad-211 019 
                India \& Homi Bhabha
National Institute, Training School Complex, Anushakti Nagar, Mumbai 400085, India}
\email{mathematics67@gmail.com}
\author{Manoj K. Yadav}
\address{Harish-Chandra Research Institute 
         Chhatnag Road, Jhunsi, 
          Allahabad-211 019 
                India \& Homi Bhabha
National Institute, Training School Complex, Anushakti Nagar, Mumbai 400085, India}
\email{myadav@hri.res.in}

\keywords{Finite p-group, conjugate type, isoclinism, Camina group}
\subjclass[2010]{20D15}

\begin{abstract}
We classify finite $p$-groups, upto isoclinism, which have only two conjugacy class sizes $1$ and $p^3$. It turns out that the  nilpotency class  of such groups is $2$. 

\end{abstract}
\maketitle
\section{Introduction}

A finite group $G$ is said to be of  conjugate type $\{m_1, m_2,\ldots, m_n \}$, if the set of conjugacy class sizes of $G$ is $\{m_1, m_2,\ldots, m_n \}$.  Finite groups of conjugate type $\{ 1, n \}$ were  first investigated by Ito 
\cite{Ito53} in 1953. He proved that if $G$ is of conjugate type $\{ 1, n \}$, then $n$ is a power of some prime $p$ and $G$ is a direct product of a non-abelian Sylow $p$-subgroup and an abelian $p'$-subgroup; in particular $G$ is nilpotent. 
Hence, to understand such groups, it is sufficient to study finite $p$-groups of conjugate type $\{ 1, p^n \}$ for $n \ge 1$. Half a century later, Ishikawa \cite{Ishikawa2002} proved that finite $p$-groups of conjugate type $\{ 1, p^n \}$ 
can have nilpotency class at most $3$. In a different paper \cite{Ishikawa1999}, he classified $p$-groups of conjugate type $\{ 1, p \}$ and $\{ 1, p^2 \}$ upto isoclinism (definition is recalled in Section 2).

 In this paper, we investigate finite $p$-groups of conjugate type $\{1, p^3\}$ and present a classification upto isoclinism.  Surprisingly we found that such groups can not be of nilpotency class $3$. Before stating our results, we 
exihibit some examples.

A finite group $G$ is said to be a \emph{Camina group} if $x^G = x G'$ for all $x \in G - G'$, where $x^G$ denotes the conjugacy class of $x$ in $G$. Let $H$ be a finite Camina $p$-group of nilpotency class $2$ 
with $|H'| = p^m \ge p^3$. Let $A$ be any subgroup of $H'$ of order $p^{m-3}$. Then it is easy to see that $H/A$ is a Camina group of conjugate type $\{1, p^3\}$. That such groups exist, follows from the following 
examples. For  an integer $m \ge 3$, let 
$$\mathcal{H} = 
\begin{Bmatrix}
\begin{bmatrix}
1 & \alpha_1 & \alpha_2\\
0 & 1 & \alpha_3\\
0 & 0 & 1
\end{bmatrix}
: \alpha_1, \alpha_2, \alpha_3\in\mathbb{F}_{p^m}
\end{Bmatrix},$$ 
where $\mathbb{F}_{p^m}$ is a finite field of $p^m$ elements.
This is a Camina $p$-group of nilpotency class $2$ with $|\mathcal{H}'| = p^{m}$. 

For any positive integer $r \ge 1$ and prime $p > 2$, consider the  following group constructed by Ito \cite{Ito53}.
\begin{eqnarray}\label{6eqn1}
G_r  &=& \big{\langle} a_1, \ldots, a_{r+1} \mid [a_i, a_j] = b_{ij}, [a_k, b_{ij}] = 1,\\
 &  &\;\;a_i^p = a_{r+1}^p = b_{ij}^p = 1, 1 \le i < j \le r+1, 1 \le k \le r+1\big{\rangle}.\nonumber
\end{eqnarray}

It follows from \cite[Example 1]{Ito53} that the group $G_r$ defined in \eqref{6eqn1} is a special $p$-group of order $p^{(r+1)(r+2)/2}$ and exponent $p$,  and $|G_r'|  = p^{r(r+1)/2}$. This group has only two different conjugacy 
class sizes, namely $1$ and $p^r$. Thus $G_3$ is of conjugate type $\{1, p^3\}$. For simplicity of notation,  we assume that $G_3$  is generated by $a$, $b$, $c$ and $d$.

In the following theorem we provide a classification of all finite $p$-groups of conjugate type  $\{1, p^3\}$, $p > 2$, upto isoclinism.

\begin{theorem}\label{thm1}
 Let $G$ be a finite $p$-group of conjugate type $\{1, p^3\}$, $p > 2$. Then the nilpotency class of $G$ is $2$ and  $G$ is isoclinic to one of following groups:

(i)   A finite Camina $p$-group of nilpotency class $2$ with commutator subgroup of order $p^3$;

(ii)  The group $G_3$, defined in \eqref{6eqn1} for $r = 3$ ;

(iii) The quotient group $G_3/M$, where $M$ is a normal subgroup of $G_3$ given by  $M = \gen{[a,\; b][c,\; d]}$;

(iv) The quotient group $G_3/N$, where $N$ is a normal subgroup of $G_3$ given by 
 $N = \gen{ [a,\;b][c,\;d],\; [a,\;c][b,\;d]^t}$ with $t$ any fixed integer non-square modulo $p$.
\end{theorem}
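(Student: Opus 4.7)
By Ishikawa's theorem the class is at most $3$. To exclude class $3$, assume $\gamma_3(G)\ne 1$, so $G'\not\le Z(G)$. Picking $x\in G'\setminus Z(G)$, the assignment $g\mapsto[x,g]$ is a homomorphism $G\to\gamma_3(G)$ (because $\gamma_3(G)\le Z(G)$) with image of size $|x^G|=p^3$, so $|\gamma_3(G)|\ge p^3$. A parallel analysis for any non-central $y\in G\setminus G'$, through the intermediate subgroup $K_y:=\{g\in G:[y,g]\in\gamma_3(G)\}\supseteq G'$, factors $p^3=[G:K_y]\cdot[K_y:C_G(y)]$ into a divisor of $|G'/\gamma_3(G)|$ times a divisor of $|\gamma_3(G)|$; a careful case analysis of these divisibility constraints for elements of both types yields the contradiction.

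\textbf{Step 2 (reduction to a bilinear form).} Once the class is $2$, replace $G$ by a stem representative of its isoclinism class so that $Z(G)\le G'$, and use $p>2$ (via the Baer correspondence) to further assume $V:=G/Z(G)$ and $W:=G'$ have exponent $p$. View $V$ and $W$ as $\mathbb{F}_p$-vector spaces and let $c\colon V\times V\to W$ be the alternating bilinear map induced by the commutator; then $(V,c)$ determines the isoclinism class of $G$ up to $\mathbb{F}_p$-linear equivalence. The hypothesis becomes $\dim_{\mathbb{F}_p}c(v,V)=3$ for every $v\ne 0$, and $c$ is non-degenerate in the sense that $L_v\colon V\to W$, $u\mapsto c(v,u)$, is nonzero for every $v\ne 0$.

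\textbf{Step 3 (classification).} If $\dim W=3$, then $c(v,V)=W$ for every $v\ne 0$, whence $G$ is Camina of class $2$ with $|G'|=p^3$, giving case (i). Otherwise $\dim W\ge 4$; write $\bar c\colon\Lambda^2V\twoheadrightarrow W$ for the induced surjection. The key intermediate step is to prove $\dim V\le 4$, by analyzing the radicals $R_v:=\ker L_v$ (each of dimension $\dim V-3$) and ruling out $\dim V\ge 5$ through a combination of Ito-type bounds and the excess of decomposable vectors forced into $\ker\bar c$. With $\dim V=4$ (so $\dim\Lambda^2V=6$), the dimension count $\dim((v\wedge V)\cap\ker\bar c)=3-3=0$ shows that $\ker\bar c$ contains no nonzero decomposable vector. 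The three cases $\dim\ker\bar c\in\{0,1,2\}$ correspond to $\dim W\in\{6,5,4\}$: $\dim\ker\bar c=0$ recovers $G_3$, case (ii); $\dim\ker\bar c=1$ is a single $GL(V)$-orbit on indecomposable lines of $\Lambda^2V$, represented by $e_1\wedge e_2+e_3\wedge e_4$, case (iii); and $\dim\ker\bar c=2$ reduces, upon identifying $\Lambda^2V$ with the space of $4\times 4$ skew matrices so that the decomposable cone is the zero locus of the Pfaffian quadratic form, to classifying anisotropic $2$-planes in a hyperbolic $6$-dimensional quadratic space over $\mathbb{F}_p$, a single $GL_4$-orbit with canonical representative spanned by $e_1\wedge e_2+e_3\wedge e_4$ and $e_1\wedge e_3+t\,e_2\wedge e_4$ for any fixed non-square $t$, giving case (iv).

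\textbf{Main obstacle.} The most delicate parts are the $\dim V\le 4$ bound (requiring fine control over the radicals $R_v$ to exclude higher-dimensional $V$) and the $\dim\ker\bar c=2$ case, which ultimately reduces to Witt's classification of anisotropic binary quadratic forms over $\mathbb{F}_p$; the non-square $t$ is exactly the Witt-discriminant witness for the unique anisotropic orbit.
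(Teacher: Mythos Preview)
Your overall architecture matches the paper's: rule out class~$3$, pass to a stem (special) representative, and classify the resulting alternating bilinear map $c\colon V\times V\to W$. Your Step~3 handling of the case $\dim V=4$ is a genuinely different and more conceptual route than the paper's: you interpret $\ker\bar c\subset\Lambda^2V$ as a subspace avoiding the Pfaffian quadric and invoke the $GL_4$-orbit structure (equivalently, Witt's theorem on anisotropic binary forms), whereas the paper carries out explicit coordinate reductions (Lemmas~3.3 and~3.7) by repeatedly applying autoclinisms of $G_3$. Your approach is cleaner and explains \emph{why} the non-square $t$ appears; the paper's is more elementary and self-contained.

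However, two steps in your outline are not actually proved, and both are exactly the places where the paper imports a substantial external result. First, in Step~1 your ``careful case analysis of these divisibility constraints'' is asserted but not carried out; the factorisation $p^3=[G:K_y]\cdot[K_y:C_G(y)]$ by itself does not obviously force a contradiction. The paper instead uses the Parmeggiani--Stellmacher classification of $p$-groups of breadth~$3$ (their Lemma~3.2, from Proposition~2.5) to pin down $[G:Z(G)]=p^4$ and $[G:G']=p^3$, and then finishes with a short explicit computation: writing $G=\langle a,b,c\rangle$ with $\alpha=[a,b]\notin Z(G)$, one arranges $[a,c],[b,c]\in Z(G)$, deduces $[\alpha,c]=1$ via Hall--Witt, and reads off $|\alpha^G|\le p^2$.

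Second, and more seriously, your ``key intermediate step'' $\dim V\le 4$ is flagged as the main obstacle but not established; the radicals $R_v$ alone do not give it, and it is not clear your ``Ito-type bounds'' suffice. The paper again gets this for free from Parmeggiani--Stellmacher: once $|G'|\ge p^4$, Lemma~3.2 forces $[G:Z(G)]=p^4$, i.e.\ $\dim V=4$. If you want to avoid that citation you must supply a direct argument, and none is sketched here. In short: your classification of the $\dim V=4$ cases is sound and arguably nicer than the paper's, but the reduction to $\dim V=4$ (and the class-$2$ step) rely on the breadth-$3$ structure theorem, which you should either cite or replace with a complete argument.
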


We remark that the number of generators of Camina groups occurring in (i) can not be bounded as shown by the group $\mathcal{H}/A$, where $\mathcal{H}$ is defined above and $A$ is a subgroup of $\mathcal{H}'$ of order $p^{m-3}$. This group is minimally generated by $2m$ elements. 
 
\smallskip
Since the nilpotency class of a finite $2$-group of conjugate type $\{1, 2^n\}$ for all $n \ge 1$  is $2$ (see Corollary  \ref{cor1}), classification problem reduces to finite $2$-groups of class $2$. To include the case $p=2$, we consider a more general class of finite $p$-groups of class $2$ and conjugate type $\{1, p^3\}$.

Let $\hat{G}_n$ denote the family consisting of  $(n+1)$-generator non-abelian special $p$-groups $G$ of order $p^{(n+1)(n+2)/2}$. Then it follows that all groups  of this family are of conjugate type $\{1, p^n\}$. It also turns out that  any two groups in $\hat{G}_n$ are isoclinic (see the remark following Lemma \ref{lem8}).  So, all groups in the family $\hat{G}_3$ are of conjugate type  $\{1, p^3\}$, where $p$ is any prime including $2$.  

Let $\hat{\mathcal{G}}_3$ denote the subfamily of $\hat{G}_3$ consisting of $2$-groups. For simplicity of notation,  we assume that a group $\mathcal{G}$ from $\hat{\mathcal{G}}_3$ is minimally generated by the set $\{a, b, c, d\}$.  A magma check shows that this family  has exactly $989$ non-isomorphic groups \cite{Magma}.  

 We are now well prepared to state our next result which provides a classification of $2$-groups of conjugate type $\{1, 8\}$ upto isoclinism.

\begin{theorem}\label{thm2}
 Let $G$ be a finite $2$-group of conjugate type $\{1, 8\}$ and nilpotency class $2$. Then $G$ is isoclinic to one of following groups:
 \begin{enumerate}[(i)]
  \item A finite Camina $2$-group  with commutator subgroup of  order $8$;
  \item A fixed group $\mathcal{G}$ in the family $\hat{\mathcal{G}}_3$, defined above;
  \item The quotient group $\mathcal{G}/M$, where $M$ is a normal subgroup of $\mathcal{G}$ such that  $M = \gen{[a,\; b][c,\; d]}$;
  \item The quotient group $\mathcal{G}/N$, where $N$ is a normal subgroup of $\mathcal{G}$ such that $N = \gen{ [a,\;b][c,\;d],\; [a,\;c][b,\;d][c,\;d]}$.
 \end{enumerate}
\end{theorem}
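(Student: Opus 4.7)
The plan is to run the same linear-algebraic strategy as in the proof of Theorem~\ref{thm1}, recasting the final orbit classification in characteristic~$2$. Since $G$ has class $2$, the commutator descends to an alternating $\mathbb{F}_2$-bilinear pairing $\phi : V \times V \to W$, where $V = G/Z(G)$ and $W = G'$; the elementary abelian structure of $V$ and $W$ is established just as in the odd-$p$ case using the standard commutator identities together with the conjugate-type bound. The hypothesis $|x^G|=8$ for $x \notin Z(G)$ then translates into the geometric statement that for each $v \in V\setminus\{0\}$, the linear map $\phi(v,-) : V \to W$ has image of dimension exactly $3$.

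I would then split on the Camina dichotomy. If $[x,G]=G'$ for every $x \notin Z(G)$, then $\dim_{\mathbb{F}_2} W = 3$, $G$ is Camina with $|G'|=8$, and we land in case~(i). Otherwise $\dim W \geq 4$, and the next step is to establish $\dim V = 4$. The lower bound $\dim V \geq 4$ is immediate from alternation: $v \in \ker \phi(v,-)$ forces $\dim V - 1 \geq 3$. For the upper bound I would argue exactly as in the odd-$p$ proof: the image of the injection $V \hookrightarrow \Hom(V,W)$ is a constant-rank-$3$ alternating subspace, and comparing the ranks of $\phi(v_1,-)$, $\phi(v_2,-)$, $\phi(v_1+v_2,-)$ for linearly independent $v_1,v_2$ forces $\dim V \leq 4$.

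Once $\dim V = 4$ is fixed, the isoclinism class of $G$ is determined by the subspace $K = \ker(\wedge^2 V \twoheadrightarrow W) \subseteq \wedge^2 V \cong \mathbb{F}_2^6$ up to the natural $GL_4(\mathbb{F}_2)$-action, and the rank-$3$ condition translates into: every nonzero element of $K$, regarded as an alternating form on $V^*$, has nonzero Pfaffian. The case $K=0$ recovers the universal form on $\wedge^2 V$ and gives case~(ii) via Lemma~\ref{lem8}. For $\dim K = 1$, the rank condition forces the generator of $K$ to be nondegenerate; all such lines are $GL_4(\mathbb{F}_2)$-equivalent by symplectic basis change, yielding the single class~(iii) realised by $\gen{[a,b][c,d]}$.

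The main obstacle is the case $\dim K = 2$, in which I must classify the $2$-planes in $\wedge^2 V$ on which the Pfaffian is everywhere nonzero. The Pfaffian restricts to $K$ as a binary quadratic form in the two coordinates, and the rank condition is precisely that this form be anisotropic. In odd characteristic there are two anisotropic classes, distinguished by the discriminant modulo squares, producing the two $t$-cases in Theorem~\ref{thm1}(iv). Over $\mathbb{F}_2$, however, there is a unique anisotropic binary quadratic form up to change of basis, namely $\lambda^2+\lambda\mu+\mu^2$ (distinguished by having Arf invariant~$1$). A direct Pfaffian check will show that the naive pair $\gen{[a,b][c,d],\,[a,c][b,d]}$ fails because the sum has vanishing Pfaffian, while $N = \gen{[a,b][c,d],\,[a,c][b,d][c,d]}$ realises the unique anisotropic class; a symplectic-basis argument then shows every admissible $2$-plane lies in a single $GL_4(\mathbb{F}_2)$-orbit, producing the unique case~(iv) and completing the classification.
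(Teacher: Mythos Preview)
Your reduction to the alternating pairing $\phi:V\times V\to W$ and the Pfaffian/Arf classification of the kernel $K\subseteq\wedge^2 V$ is a clean repackaging of what the paper accomplishes by explicit autoclinism manipulations in Lemmas~\ref{lem4}, \ref{lem7} and \ref{lem10}; once $\dim V=4$ is known, the two routes yield the same orbit list, and yours is more conceptual.

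The genuine gap is the step $\dim V\le 4$ in the non-Camina case. You say you would ``argue exactly as in the odd-$p$ proof'' via rank comparison, but the odd-$p$ argument in this paper contains no such step: it simply quotes Lemma~\ref{lem3}, which in turn rests on the Parmeggiani--Stellmacher breadth-$3$ theorem (Proposition~\ref{prop8}), stated only for $p>2$. For $p=2$ the paper substitutes Wilkens' classification (Theorem~\ref{Wilkens-thm}), which produces two extra configurations, (iii) and (iv), absent from the odd-$p$ dichotomy, and then eliminates each separately by exhibiting an element of class size at most $2$ (the element $v_3$ in case~(iv), and a direct index count in case~(iii)). Your sentence ``comparing the ranks of $\phi(v_1,-)$, $\phi(v_2,-)$, $\phi(v_1+v_2,-)$ forces $\dim V\le 4$'' is not an argument: three linear maps of rank $3$ summing to zero impose no bound on the ambient dimension, and indeed Wilkens' case~(iv) shows that breadth $3$ with $|G'|=2^4$ does not by itself force $[G:\Z(G)]=2^4$. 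To repair the proof you must either invoke Wilkens and dispatch the two exceptional cases as the paper does, or supply a direct argument that constant rank $3$ together with $\dim W\ge 4$ forces $\dim V=4$ over $\mathbb{F}_2$ (for instance, when $\dim V=5$ the $2$-dimensional radicals $\ker\phi(v,-)$ would partition $V\setminus\{0\}$ into blocks of size $3$, contradicting $2^5-1=31$; but $\dim V\ge 6$ requires further work).
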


\smallskip

Unlike $2$-groups, there do exist $p$-groups of nilpotency class $3$ and conjugate type $\{1, p^2\}$. These groups are all isoclinic to the group $W$ given in \cite{Ishikawa1999} and  presented as 
\begin{eqnarray*}
W &=& \big{\langle} a_1, a_2, b, c_1, c_2 \mid [a_1, \;a_2] = b, [a_i, \;b] = c_i\\
& & a_i^p = b^p = c_i^p = [a_i, \;c_i]= [a_1,\;c_2] = [a_2, \;c_1] =1 \;(i = 1,\;2)\big{\rangle}.
\end{eqnarray*}

A natural question which arises here is 
\smallskip

\noindent{\bf Question.} Does there exist a finite $p$-group of nilpotency class $3$, for an odd prime $p$, and conjugate type $\{1, p^n\}$, $n \ge 1$, which is not isoclinic to $W$? If yes, for which values of $n \ge 4$, there exists a  finite $p$-group of nilpotency class $3$ and conjugate type $\{1, p^n\}$?

\smallskip

 Before concluding this section, we set some notations for a multiplicatively written finite group $G$ which are mostly standard. We denote by $G'$ the  commutator subgroup of $G$. For a subgroup $H$ of $G$, by $H^{\#}$ we denote the set 
 of non-trivial elements of $H$. For the elements $x, y, z \in G$, the commutator  $[x, y]$ of $x$ and $y$  is defined by  $x^{-1}y^{-1}xy$, and $[x,y,z]$ = $[[x,y],z]$. Frattini subgroup of $G$ is denoted by $\Phi(G)$. For an element $x \in G$, $x^G$ denotes the conjugacy class of $x$ in $G$. To say that some $H$ is a subgroup or a normal subgroup of $G$ we write  $H \leq G$ or $H \normaleq G$, respectively. 
To indicate, in addition, that $H$ is properly contained in $G$, we write  $H < G$ or $H \normal G$, respectively.

\section{Preliminaries}

The following concept of isoclinism of groups was introduced by P. Hall \cite{Hall40}.

Let $X$ be a finite group and $\bar{X} = X/\Z(X)$. 
Then commutation in $X$ gives a well defined map
$a_{X} : \bar{X} \times \bar{X} \mapsto \gamma_{2}(X)$ such that
$a_{X}(x\Z(X), y\Z(X)) = [x,y]$ for $(x,y) \in X \times X$.
Two finite groups $G$ and $H$ are called \emph{isoclinic} if 
there exists an  isomorphism $\phi$ of the factor group
$\bar G = G/\Z(G)$ onto $\bar{H} = H/\Z(H)$, and an isomorphism $\theta$ of
the subgroup $G'$ onto  $H'$
such that the following diagram is commutative
\[
 \begin{CD}
   \bar G \times \bar G  @>a_G>> G'\\
   @V{\phi\times\phi}VV        @VV{\theta}V\\
   \bar H \times \bar H @>a_H>> H'.
  \end{CD}
\]
The resulting pair $(\phi, \theta)$ is called an \emph{isoclinism} of $G$ 
onto $H$. Notice that isoclinism is an equivalence relation among finite groups.

The following two results follow from \cite{Hall40}.

\begin{proposition}\label{prop4}
Let $G$ and $H$ be two isoclinic finite $p$-groups. Then $G$ and $H$ are of the same conjugate type.
\end{proposition}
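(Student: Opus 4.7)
The plan is to exploit the commutative diagram defining isoclinism: the conjugate type is determined entirely by the commutator maps $a_G$ and $a_H$, which are identified under $(\phi, \theta)$. The observation I would start from is that for any $x \in G$, the identity $x^g = x[x,g]$ gives $x^g = x^h$ if and only if $[x,g] = [x,h]$, so
\[
|x^G| = \bigl|\{[x,g] : g \in G\}\bigr| = \bigl|\{a_G(\bar x, \bar g) : \bar g \in \bar G\}\bigr|,
\]
where $\bar x = x\Z(G)$ and $\bar G = G/\Z(G)$. In particular $|x^G|$ depends only on the coset $\bar x$, which one also sees directly from $[xz, g] = [x,g]$ for $z \in \Z(G)$.

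Next I would transport this to $H$ using the diagram. Given $x \in G$, choose any $y \in H$ with $\bar y = \phi(\bar x)$. For every $\bar g \in \bar G$, commutativity gives
\[
\theta\bigl(a_G(\bar x, \bar g)\bigr) = a_H\bigl(\phi(\bar x), \phi(\bar g)\bigr) = a_H\bigl(\bar y, \phi(\bar g)\bigr).
\]
Since $\theta \colon G' \to H'$ and $\phi \colon \bar G \to \bar H$ are bijections, $\theta$ carries $\{a_G(\bar x, \bar g) : \bar g \in \bar G\}$ bijectively onto $\{a_H(\bar y, \bar h) : \bar h \in \bar H\}$. Combined with the formula from the previous paragraph applied in both $G$ and $H$, this yields $|x^G| = |y^H|$.

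Applying the same argument with the isoclinism $(\phi^{-1}, \theta^{-1})$ gives the reverse inclusion of conjugacy class sizes, so $G$ and $H$ have the same conjugate type. There is no real obstacle in this proof: the entire content is the fact that conjugacy class sizes are encoded by the commutator map $a_G$, and an isoclinism is by definition an isomorphism of exactly this data. The only small point to record carefully is the well-definedness of the map $\bar x \mapsto |x^G|$, which is immediate from centrality of $\Z(G)$.
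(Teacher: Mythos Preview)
Your argument is correct: the identity $|x^G| = |\{a_G(\bar x,\bar g):\bar g\in\bar G\}|$ is exactly what one needs, and the commutative diagram transports it verbatim to $H$. One minor remark: you do not even need to invoke $(\phi^{-1},\theta^{-1})$ separately, since the bijection $\phi$ already shows that every class size in $H$ arises from some class size in $G$ and conversely.

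As for comparison with the paper: the paper does not actually prove this proposition but simply attributes it to Hall \cite{Hall40}. Your write-up is therefore a self-contained justification of a result the authors chose to quote, and it is the standard one.
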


\begin{proposition}\label{prop5}
Let $G$ be a finite $p$-group. Then there exists a group $H$ in the isoclinism family of $G$ such that $Z(H) \leq H'$. 
\end{proposition}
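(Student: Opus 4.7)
The plan is to pass within the isoclinism family of $G$ to a representative of minimum order and to show that any such representative already satisfies $Z(H)\le H'$.

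The key preliminary step is a quotient principle: if $N\le Z(H)$ with $N\cap H'=1$, then $H$ and $H/N$ are isoclinic. Indeed $(H/N)' = H'N/N \cong H'$; and if $hN\in Z(H/N)$ then $[h,H]\le N$, which forces $[h,H]\le N\cap H'=1$ and hence $h\in Z(H)$, giving $Z(H/N)=Z(H)/N$ and $(H/N)/Z(H/N)\cong H/Z(H)$. The canonical maps intertwine the two commutator maps, so the pair is an isoclinism.

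With this in hand, I would pick $H$ of minimum order in the isoclinism family of $G$; since $|H/Z(H)|$ and $|H'|$ are isoclinism invariants, equal respectively to $|G/Z(G)|$ and $|G'|$, this is the same as making $|Z(H)|$ minimum. I claim $Z(H)\le H'$. Suppose for contradiction that $Z(H)\not\le H'$. The idea is to exhibit a nontrivial central subgroup $N\le Z(H)$ with $N\cap H'=1$: by the lemma, $H/N$ would then be a strictly smaller representative of the same family, contradicting minimality. In the favourable case $\Omega_1(Z(H))\not\le H'$ this is immediate: any element $c$ of order $p$ in $Z(H)\setminus H'$ yields $N=\langle c\rangle$ of order $p$ with $N\cap H'=1$.

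The main obstacle is the residual case $\Omega_1(Z(H))\le H'$, in which every nontrivial cyclic subgroup of $Z(H)$ meets $H'$ nontrivially, so no quotient of $H$ can directly produce the desired contradiction; here one must abandon the quotient approach and construct the stem representative from scratch rather than inside $H$. To handle this I would invoke Hall's original construction in \cite{Hall40}, which builds a stem representative of the family as a central extension of $G/Z(G)$ by an appropriate subgroup of its Schur multiplier, arranged so that its $2$-cocycle realises the commutator cocycle of $G$; one then checks by direct computation that the resulting group is isoclinic to $G$ and satisfies $Z\le [\,,\,]$, completing the argument.
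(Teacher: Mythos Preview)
The paper does not give its own proof of this proposition: it simply records that the result follows from Hall's original paper \cite{Hall40}. So there is no in-paper argument to compare your attempt against.

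Your sketch is correct as far as it goes. The quotient principle (if $N\le Z(H)$ with $N\cap H'=1$ then $H/N$ is isoclinic to $H$) is standard and your verification of it is fine; picking a representative of minimum order and disposing of the case $\Omega_1(Z(H))\not\le H'$ by quotienting out a central subgroup of order $p$ is also correct. You are right that the residual case $\Omega_1(Z(H))\le H'$ with $Z(H)\not\le H'$ cannot be handled by quotients of $H$ alone, and that one then falls back on Hall's extension-theoretic construction of a stem representative. The only stylistic remark is that once you invoke Hall's construction the minimum-order preamble becomes redundant: that construction already produces a group $H$ isoclinic to $G$ with $Z(H)\le H'$ directly, so the two halves of your argument are really one incomplete approach (minimum order plus quotient) and one complete approach (Hall) placed side by side rather than a single argument. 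That said, your diagnosis of where the genuine difficulty lies is accurate, and it matches the reason the paper simply cites \cite{Hall40} rather than reproving the result.
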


Group $H$ which occurred in Proposition \ref{prop5} is called  a \emph{stem group} in its isoclinism class. In the light of the preceding two results, for the  classification of finite $p$-groups of conjugate type $\{1,p^n\}$ upto isoclinism, we only need to consider  a stem group from the respective isoclinism family. 

The following result is due to Vaughan-Lee \cite[p. 270, Theorem]{Vaughan1974}.
\begin{proposition}\label{prop6}
Let  $G$ be a finite $p$-group. Suppose that every conjugacy class of $G$ contains at most $p^n$ elements. Then $|G'|\leq p^{n(n+1)/2}$.
\end{proposition}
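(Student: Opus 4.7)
The plan is to translate the bound on conjugacy-class sizes into a rank constraint on the commutator pairing, and then combine an induction on the nilpotency class with a bilinear-algebra estimate. For each $x \in G$, note that $|[x, G]| = |x^G| \le p^n$, and modulo $\gamma_3(G)$ the set $[x, G]$ is a subgroup of $G'/\gamma_3(G)$. The commutator therefore induces a surjective alternating bilinear pairing $\omega \colon V \times V \to G'/\gamma_3(G)$ with $V = G/\gamma_2(G)$, each of whose ``slices'' $\omega(v, V)$ has order at most $p^n$.

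I would induct on the nilpotency class $c$ of $G$. The case $c = 1$ is trivial, and for $c \ge 3$ the quotient $G/\gamma_c(G)$ has class $c - 1$ and breadth at most $n$, so inductively $|G'/\gamma_c(G)| \le p^{n(n+1)/2}$. A separate argument, using the iterated commutator pairings $V \times \gamma_{i-1}(G)/\gamma_i(G) \to \gamma_i(G)/\gamma_{i+1}(G)$, is then needed to show that any nontrivial $\gamma_c(G)$ must be offset by a strict decrease in the bound coming from the class-$2$ quotient, so that the product of sizes does not exceed $p^{n(n+1)/2}$.

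The heart of the proof is the class-$2$ case. Passing to the exponent-$p$ quotients $V = G/\Phi(G)$ and $W = G'/\Phi(G')$, we obtain a surjective alternating $\mathbb{F}_p$-bilinear pairing $\omega \colon V \times V \to W$ with $\dim_{\mathbb{F}_p} \omega(v, V) \le n$ for every $v$, and must show $\dim W \le n(n+1)/2$. I would prove this linear-algebra statement by induction on $n$: pick $v_1 \in V$ with $\dim \omega(v_1, V) = n$ (if the maximal slice rank is $<n$, induction applies directly), choose $v_2, \ldots, v_{n+1}$ whose $\omega(v_1, -)$-images span $\omega(v_1, V)$, and pass to the induced pairing on $V/\langle v_1\rangle$ with values in $W/\omega(v_1, V)$. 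Using the alternating identity $\omega(v, v_1) = -\omega(v_1, v)$ together with an extremal choice of $v_1$, one shows that the maximum slice rank of the quotient pairing drops to at most $n - 1$; induction then gives $\dim W/\omega(v_1, V) \le n(n-1)/2$, whence $\dim W \le n + n(n-1)/2 = n(n+1)/2$.

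The main obstacle I anticipate is verifying the slice-rank drop in this linear-algebra step. A priori the induced pairing could retain full rank $n$ on the kernel $\ker \omega(v_1, -)$, so one must exploit the alternating property and a careful extremal choice of $v_1$ (and compatible complementary basis) to guarantee that the intersections $\omega(v, V) \cap \omega(v_1, V)$ are nontrivial for enough $v$; an alternative would be to pass to the associated graded Lie algebra of $G$ and argue there, where the additional rigidity makes the rank-drop cleaner. The reduction from arbitrary class to class $2$ is a secondary but related concern, requiring careful management of the lower central series filtration.
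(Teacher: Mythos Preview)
The paper does not prove this proposition at all: it is quoted verbatim as a theorem of Vaughan-Lee \cite{Vaughan1974} and simply cited, so there is no in-paper argument to compare your approach against.

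Judged on its own terms, your sketch frames the problem naturally but stops precisely at the two places where the real work lies. For the reduction from arbitrary class to class~$2$, the inductive hypothesis gives only $|G'/\gamma_c(G)|\le p^{n(n+1)/2}$, which says nothing about $|\gamma_c(G)|$; the unspecified ``offset'' you invoke is exactly the content that is missing, and Vaughan-Lee's own argument does not proceed by first reducing to class~$2$. For the bilinear step, the crucial claim that passing to $V/\langle v_1\rangle$ and $W/\omega(v_1,V)$ drops the maximum slice rank by one is not a formal consequence of the alternating identity: when $\omega(v,v_1)=0$ there is no automatic nonzero element in $\omega(v,V)\cap\omega(v_1,V)$, and one has to bring in the global hypothesis (that \emph{every} slice has rank $\le n$) through a further extremal argument on linear combinations of $v$ and $v_1$. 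You correctly flag both points as obstacles, but as written the proposal is an outline with the substantive steps left open rather than a proof.
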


The following result is due to Ito \cite[Proposition 3.2]{Ito53}.
\begin{proposition}\label{prop7}
Let $G$ be a finite $p$-group of conjugate type $\{1,p^n\}$. Then the number of elements in any minimal generating set is at least $n$, and order of  subgroup of all the elements of order $p$ of $Z(G)$ is at least $p^n$.
\end{proposition}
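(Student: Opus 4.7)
My plan is to derive both conclusions simultaneously from a single well-chosen commutator homomorphism.

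The central step is to exhibit a non-central $x\in Z_2(G)$ that centralises every $p$-th power of $G$. This reduces to establishing $G^p\le Z(G)$, because then $C_G(G^p)=G$ and any $x\in Z_2(G)\setminus Z(G)$ (which exists as $G$ is nilpotent and non-abelian) does the job. When $G$ has exponent $p$ there is nothing to prove. In the nilpotency-class-$2$ reduction one argues as follows: were some $g^p$ non-central, then $|[G,g^p]|=p^n$, but from $[g^p,h]=[g,h]^p$ (valid in class two) one has $[G,g^p]=[G,g]^p$, a subgroup of $[G,g]$ of order $p^n/|\Omega_1([G,g])|<p^n$, a contradiction. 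For higher class one must correct this identity by Hall--Petrescu terms in $\gamma_3(G)$, and controlling these is where I expect the main technical hurdle to lie.

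Given such an $x$, the inclusion $[G,Z_2(G)]\le Z(G)$ makes every commutator $[g,x]$ central, and the identity $[gh,x]=[g,x]^h[h,x]=[g,x][h,x]$ shows that $\phi_x\colon G\to Z(G)$, $g\mapsto[g,x]$, is a group homomorphism with kernel $C_G(x)$ and image $[G,x]$ of order $[G:C_G(x)]=p^n$. Centrality of $[g,x]$ further yields $[g,x]^p=[g^p,x]=1$, so $[G,x]$ has exponent $p$; hence $[G,x]\le\Omega_1(Z(G))$, proving $|\Omega_1(Z(G))|\ge p^n$. The same computation identifies $G/C_G(x)\cong[G,x]$ as an elementary abelian group of order $p^n$, which therefore requires exactly $n$ elements to generate; since every quotient of $G$ is generated by at most as many elements as $G$ itself, the minimum number of generators of $G$ is at least $n$.
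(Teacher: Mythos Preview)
The paper does not actually prove this proposition; it simply attributes the result to Ito \cite[Proposition 3.2]{Ito53}. So there is no proof in the paper to compare against, and the question is whether your attempt stands on its own.

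Your second paragraph is a clean and correct derivation of both conclusions from the existence of a suitable $x$. The problem is in the first paragraph: you reduce everything to establishing $G^p\le Z(G)$, prove it only in class~$2$, and then openly concede that the class~$\ge 3$ case remains undone. That is a genuine gap, and the detour through $G^p\le Z(G)$ is what creates it. The identity you use, $[g,x]^p=[g^p,x]$, exploits that $\phi_x$ is a homomorphism in its \emph{first} variable. But exactly the same reasoning applies in the \emph{second} variable: for $y,y'\in Z_2(G)$ one has $[g,yy']=[g,y'][g,y]^{y'}=[g,y][g,y']$ because $[g,y]\in Z(G)$, so $[g,x]^p=[g,x^p]$ as well. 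Now simply choose $x\in Z_2(G)\setminus Z(G)$ with $x^p\in Z(G)$; this is always possible since $Z_2(G)/Z(G)$ is a non-trivial $p$-group and hence contains an element of order $p$. Then $[g,x]^p=[g,x^p]=1$ for every $g\in G$, and your final paragraph applies verbatim, with no need for $G^p\le Z(G)$, Hall--Petrescu corrections, or any case distinction on the nilpotency class.
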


Let $G$ be a  finite $p$-group and $x \in G$ be such that $|x^G| = p^{b(x)}$. Then $b(x)$ is called the {\it breadth of} $x$.  The breadth of $G$, denoted by $b(G)$, is defined as $max\{b(x) \mid x \in G\}$.

The following result is due to Parmeggiani and Stellmacher \cite[p. 59, Corollary]{PS99}
\begin{proposition}\label{prop8}
 Let $G$ be a $p$-group, $p > 2$. Then  $b(G) =3$  if and only if one of following holds:
\begin{enumerate}[(i)]
 \item $|G'|$ = $p^3$ and $[G : Z(G)] \geq p^4 $.
 \item $|G'|$ = $p^4$ and there exists $H$ $\vartriangleleft$ $G$ with $|H|$ = $p$ and $[G/H : Z(G/H)] = p^3$.
 \item $|G'| \geq p^4$ and $[G : Z(G)]= p^4 $.
\end{enumerate}
\end{proposition}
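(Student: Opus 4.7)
My plan is to use four standard tools: (a) for any $x \notin Z(G)$, $C_G(x) \supseteq Z(G)\langle x\rangle \supsetneq Z(G)$, so $|x^G| \leq [G:Z(G)]/p$, giving $b(G) \leq \log_p[G:Z(G)] - 1$; (b) since $x^G \subseteq xG'$, also $|x^G| \leq |G'|$; (c) Vaughan-Lee (Proposition \ref{prop6}), yielding $|G'| \leq p^6$ once $b(G) = 3$; (d) for $H \normaleq G$, the natural surjection $x^G \to (xH)^{G/H}$ has fibres of size at most $|H|$, hence $b(G) \leq b(G/H) + \log_p|H|$.

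\emph{Sufficiency.} If (i) holds, (b) forces $b(G) \leq 3$; to obtain equality one rules out $b(G) \leq 2$ by combining $|G'| = p^3$ with $[G:Z(G)] \geq p^4$ and the analogous classification of breadth-$2$ groups (which bounds $[G:Z(G)]$ in terms of $|G'|$). In (ii), (d) applied to the given $H$ of order $p$ gives $b(G) \leq b(G/H) + 1 \leq 2 + 1$, using (a) inside $G/H$, while $|G'| = p^4$ together with (b) and (c) supplies an element of breadth exactly $3$. In (iii), (a) immediately gives $b(G) \leq 3$, and $|G'| \geq p^4$ with $[G:Z(G)] = p^4$ forces an element of breadth $3$ by a direct centralizer count.

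\emph{Necessity.} Assume $b(G) = 3$. Then (a) and (c) give $[G:Z(G)] \geq p^4$ and $p^3 \leq |G'| \leq p^6$. If $|G'| = p^3$, case (i) holds. If $|G'| \geq p^4$ and $[G:Z(G)] = p^4$, case (iii) holds. The remaining subcase is $|G'| \geq p^4$ with $[G:Z(G)] \geq p^5$; here the target is to show $|G'| = p^4$ and to exhibit a central subgroup $H$ of order $p$ with $[G/H : Z(G/H)] = p^3$. My approach would be to study the $\mathbb{F}_p$-bilinear alternating commutator pairing
\[
c : V \times V \to G', \qquad V = G/Z(G),
\]
under the constraint that each row $c(v, \cdot) : V \to G'$ has image of dimension at most $3$. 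Using symplectic normal forms for alternating pairings (available when $p > 2$), one coordinates the rank bounds across a generating set of $V$ to force $\dim G' \leq 4$, hence $|G'| = p^4$; one then locates $H$ as the kernel of a suitable $\mathbb{F}_p$-linear functional $G' \to \mathbb{F}_p$ chosen so that passage to $G/H$ trims every row of $c$ down to rank at most $2$, yielding $[G/H : Z(G/H)] = p^3$ via (a).

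The main obstacle is this last, coordinated rank analysis: translating the single global bound $b(G) = 3$ into enough simultaneous rank control on the commutator pairing to both cap $\dim G'$ and locate $H$. This is where the Parmeggiani-Stellmacher argument does its real work, and where the hypothesis $p > 2$ is essential --- the clean alternating-form normal forms fail at $p = 2$ and additional case analysis is needed there.
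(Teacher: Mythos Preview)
The paper does not prove this proposition at all: it is quoted verbatim as the Corollary on p.~59 of Parmeggiani--Stellmacher \cite{PS99} and used as a black box. So there is no ``paper's own proof'' to compare against; what you have written is an attempted sketch of a result the authors simply cite.

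That said, your sketch has a genuine gap in the hard part of the necessity direction. You propose to study the ``$\mathbb{F}_p$-bilinear alternating commutator pairing'' $c:V\times V\to G'$ with $V=G/\Z(G)$, and then invoke symplectic normal forms. But the commutator map on $G/\Z(G)\times G/\Z(G)$ is bilinear only when $G$ has nilpotency class~$2$: in general $[xy,z]=[x,z]^y[y,z]=[x,z][x,z,y][y,z]$, and the correction term $[x,z,y]$ need not vanish. The proposition is stated for arbitrary $p$-groups, and precisely the delicate residual case $|G'|\ge p^4$, $[G:\Z(G)]\ge p^5$ includes groups of class~$\ge 3$; your linear-algebra reduction is not available there without further argument. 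Even restricting to class~$2$, the pairing lands in a vector space $G'$ of dimension possibly up to~$6$, so you are dealing with a \emph{pencil} (indeed a system) of alternating forms, for which there is no clean symplectic normal form---classification of such systems is exactly the hard content of \cite{PS99}. Finally, your diagnosis of where $p>2$ enters is off: the normal form of a single alternating bilinear form works over any field, characteristic~$2$ included; in Parmeggiani--Stellmacher the restriction $p>2$ comes in through commutator/power identities and the structure of small sections, not through symplectic geometry.
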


A similar result for $p = 2$ is proved by  Wilkens \cite{Wilkens07}, a consequence of which is  stated in the last section for the groups having conjugate type $\{1, 8\}$. 

The following result is a part of \cite[p. 501, Theorem]{Isaacs1970}.
\begin{proposition}\label{prop9}
Let $G$ be a finite group which contains a proper normal subgroup $N$ such that all of the conjugacy classes of $G$ which lie outside of $N$ have  the same lengths. Then either $G/N$ is cyclic or every non-identity element of $G/N$ has prime order.
\end{proposition}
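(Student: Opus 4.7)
Write $m$ for the common length of a conjugacy class of $G$ lying outside $N$. The pivotal observation is that whenever $x\in G\setminus N$ and $x^k\in G\setminus N$, the chain $C_G(x)\le C_G(x^k)$ combined with $|C_G(x)|=|C_G(x^k)|=|G|/m$ yields $C_G(x)=C_G(x^k)$. I plan to prove the contrapositive: if some element $xN\in G/N$ has composite order, then $G/N$ is cyclic.

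Let $xN$ have composite order $n$, fix a prime divisor $p$ of $n$ with $p<n$, and set $d=n/p$. Then $x^p, x^d\in G\setminus N$, so the pivotal observation gives $C_G(x)=C_G(x^p)=C_G(x^d)$. Suppose, for contradiction, that there exists $y\in G$ with $yN\notin\langle xN\rangle$; then $yx^i\notin N$ and $|C_G(yx^i)|=|G|/m$ for every $i\in\mathbb{Z}$. Rearranging $g\cdot yx^i=yx^i\cdot g$ yields $y^{-1}gy=x^igx^{-i}$, so if $g\in C_G(yx^i)\cap C_G(yx^j)$ with $i\not\equiv j\pmod n$, then $g$ centralises $x^{j-i}\notin N$; by the pivotal observation $g\in C_G(x)$, and substituting back forces $g\in C_G(y)$. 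Setting $K=C_G(x)\cap C_G(y)$, this yields the crucial equality
\[
C_G(yx^i)\cap C_G(yx^j)=K\qquad\text{whenever } i\not\equiv j\pmod n.
\]

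Since every $C_G(yx^i)$ contains $K$, the sets $C_G(yx^i)\setminus K$ are pairwise disjoint, and comparing cardinalities with $|G|-|K|$ produces the counting inequality $|K|(n-1)\ge|G|(n-m)/m$. The step I expect to be the main obstacle is converting this lower bound into an actual contradiction. One route is to iterate the intersection argument with a second auxiliary element $y'$ chosen outside $\langle x,N\rangle$, or with $x$ replaced by $x^p$, exploiting the equality $C_G(x)=C_G(x^p)$ to identify the relevant centraliser intersections; telescoping the counting inequalities for the exponents $n$ and $d$ is expected to force $|K|$ to both exceed and fall short of a common value. An alternative finish, closer to Isaacs' original argument, is character-theoretic: one bounds $|K|$ via the irreducible characters of $G$ whose kernels avoid $N$ and applies the orthogonality relations to the class sum of $x^G$. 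Either way, the resulting contradiction rules out $y$, so $G=\langle x,N\rangle$ and $G/N$ is cyclic, completing the contrapositive.
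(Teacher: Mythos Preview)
The paper does not supply its own proof of this proposition: it simply quotes the statement as part of Isaacs' theorem \cite[p.~501]{Isaacs1970} and moves on to its corollaries. So there is nothing in the paper to compare your argument against.

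On the merits of your plan: the opening is sound. The centralizer-equality observation $C_G(x)=C_G(x^k)$ whenever $x,x^k\in G\setminus N$ is correct and is the natural starting point, and your derivation of $C_G(yx^i)\cap C_G(yx^j)=K$ for $i\not\equiv j\pmod n$ is clean. However, you yourself flag the gap, and it is genuine. The inequality $|K|(n-1)\ge |G|(n-m)/m$ is not, by itself, restrictive enough to yield a contradiction: combined with the trivial upper bound $|K|\le |C_G(y)|=|G|/m$ it only says $m\ge 1$. Your two proposed finishes are sketches rather than arguments. ``Iterating with $x^p$'' does not obviously help, since $C_G(x)=C_G(x^p)$ means the second pass produces the \emph{same} intersection $K$ and the same inequality with $d$ in place of $n$; telescoping gives nothing new. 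The character-theoretic route is essentially a pointer back to Isaacs' original proof, not an independent finish. As written, the plan stops short of a proof, and neither suggested completion is developed enough to see how it would close the gap.
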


Direct consequence of this result are the following corollaries.
\begin{cor}\label{lem2}
  Let $G$ be a finite $p$-group of conjugate type $\{1, p^n\}$ such  that $Z(G) = G' $. Then 
$ G/Z(G)$ and $G'$ are elementary abelian p-groups.
\end{cor}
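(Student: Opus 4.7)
The plan is to apply Proposition \ref{prop9} directly with $N = Z(G) = G'$. The hypothesis of that proposition requires $N$ to be a proper normal subgroup such that all conjugacy classes outside $N$ have the same length. Since $G$ is of conjugate type $\{1, p^n\}$ (with $n \geq 1$, so $G$ is non-abelian), the only conjugacy classes of size $1$ are the singletons $\{x\}$ with $x \in Z(G) = N$, and every element of $G \setminus N$ has conjugacy class of size exactly $p^n$. Thus the hypothesis is satisfied.

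By Proposition \ref{prop9}, either $G/Z(G)$ is cyclic or every non-identity element of $G/Z(G)$ has prime order. The first alternative would force $G$ to be abelian, contradicting $n \geq 1$. Hence every non-identity element of $G/Z(G)$ has order $p$. Since $Z(G) = G'$, the quotient $G/Z(G) = G/G'$ is abelian, so $G/Z(G)$ is elementary abelian.

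It remains to see that $G'$ is elementary abelian. Since $G/Z(G)$ is abelian, $G$ has nilpotency class at most $2$, and hence standard commutator identities give $[x^p, y] = [x,y]^p$ and bilinearity of $[\,,\,]$ modulo $Z(G)$. Because $G/Z(G)$ has exponent $p$, we have $x^p \in Z(G)$, so $[x,y]^p = [x^p, y] = 1$ for all $x, y \in G$. Since $G'$ is generated by commutators and sits inside the abelian group $Z(G)$, it is abelian and generated by elements of order dividing $p$, hence elementary abelian.

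The only step that needs any care is verifying the hypothesis of Proposition \ref{prop9} — in particular noting that $Z(G) \ne G$ (i.e.\ $G$ is non-abelian, which is implicit in the definition of conjugate type $\{1,p^n\}$) — and then ruling out the cyclic alternative in the conclusion. The rest is a standard exponent-$p$ commutator computation in class $2$.
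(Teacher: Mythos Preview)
Your proof is correct and follows exactly the approach the paper intends: the paper merely states that this corollary is a direct consequence of Proposition~\ref{prop9}, and you have filled in precisely those details---applying the proposition with $N = Z(G) = G'$, ruling out the cyclic alternative because $G/Z(G)$ cyclic forces $G$ abelian, and then using the standard class-$2$ identity $[x,y]^p = [x^p,y] = 1$ to obtain that $G'$ is elementary abelian.
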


\begin{cor}\label{cor1}
Let $G$ be a finite $2$-group of conjugate type $\{1, 2^n\}$, $n \ge 1$. Then the nilpotency class of $G$ is $2$.
\end{cor}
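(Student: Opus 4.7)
The plan is to deduce the corollary directly from Proposition \ref{prop9} applied with $N = Z(G)$, exploiting the accident that for $p=2$ the only prime available is $2$ itself.

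First, I would note that since $G$ has conjugate type $\{1, 2^n\}$ with $n \ge 1$, the group $G$ is non-abelian (otherwise the conjugate type would be $\{1\}$). Hence $Z(G)$ is a proper normal subgroup of $G$, and by definition every element $x \in G \setminus Z(G)$ satisfies $|x^G| = 2^n$. In particular, all conjugacy classes of $G$ lying outside the proper normal subgroup $N := Z(G)$ have the same size $2^n$, so the hypotheses of Proposition \ref{prop9} are met.

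Applying Proposition \ref{prop9}, either $G/Z(G)$ is cyclic or every non-identity element of $G/Z(G)$ has prime order. The first option is impossible: a group with cyclic central quotient is abelian, contradicting non-abelianness of $G$. So every non-identity element of $G/Z(G)$ has prime order, and since $G$ is a $2$-group this forces every non-identity element of $G/Z(G)$ to have order exactly $2$. A group of exponent $2$ is elementary abelian, hence in particular abelian, so $G/Z(G)$ is abelian. This means $G' \le Z(G)$, i.e., the nilpotency class of $G$ is at most $2$. Combined with non-abelianness, the class is exactly $2$.

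There is essentially no obstacle here: the work has already been done in Proposition \ref{prop9}, and the only role of the assumption $p = 2$ is to upgrade the conclusion ``every non-identity element of $G/Z(G)$ has prime order'' to ``$G/Z(G)$ has exponent $2$,'' which for a general odd prime $p$ would only give exponent $p$ and would not force abelianness (as witnessed by the group $W$ of class $3$ mentioned in the introduction). Thus the argument is a one-line consequence of Isaacs' proposition, specialized to the prime $2$.
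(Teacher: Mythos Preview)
Your proof is correct and is precisely the argument the paper intends: the corollary is stated there as a direct consequence of Proposition~\ref{prop9}, and you have spelled out exactly that deduction by applying it with $N=Z(G)$ and using that exponent~$2$ forces $G/Z(G)$ to be abelian.
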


For a given group $G$, an isoclinism $(\phi, \theta)$ from $G$ onto itself is called an {\it autoclinism} of $G$. It is not difficult to prove the following result.

\begin{lemma}\label{ext-autoclinism}
Let $G$ be a group from the family $\hat{G}_n$. Then a bijection between any two minimal generating sets for $G$ extends to an   autoclinism of $G$.
\end{lemma}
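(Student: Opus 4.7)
My plan is to exploit the fact that, for a group $G \in \hat{G}_n$, being special forces $Z(G) = G' = \Phi(G)$ and both $\bar G := G/Z(G)$ and $G'$ are elementary abelian $p$-groups, viewed as $\mathbb{F}_p$-vector spaces of dimensions $n+1$ and $n(n+1)/2$ respectively. Since $G$ has nilpotency class two, the commutator map descends to an $\mathbb{F}_p$-bilinear (and alternating) pairing
\[
a_G \colon \bar G \times \bar G \longrightarrow G',\qquad a_G(\bar x,\bar y)=[x,y],
\]
because $[xy,z]=[x,z][y,z]$ and $[x,yz]=[x,y][x,z]$ hold in any class-two group, and the pairing is trivial on $p$th powers since $G'$ has exponent $p$.

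Next I will use the Burnside basis theorem: any minimal generating set $X=\{x_1,\dots,x_{n+1}\}$ of $G$ projects to an $\mathbb{F}_p$-basis $\{\bar x_1,\dots,\bar x_{n+1}\}$ of $\bar G=G/\Phi(G)$. Because $G$ is class two, the commutators $\{[x_i,x_j]:1\le i<j\le n+1\}$ generate $G'$; there are exactly $n(n+1)/2=\dim_{\mathbb{F}_p} G'$ of them, so they are automatically linearly independent and form a basis of $G'$. The same statement holds for a second minimal generating set $Y=\{y_1,\dots,y_{n+1}\}$.

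Given a bijection $\sigma\colon X\to Y$, I define $\phi\in\Aut(\bar G)$ by $\phi(\bar x_i)=\overline{\sigma(x_i)}$ and extending $\mathbb{F}_p$-linearly; this is an isomorphism because it sends a basis to a basis. I then define $\theta\in\Aut(G')$ on the basis $\{[x_i,x_j]:i<j\}$ by
\[
\theta\bigl([x_i,x_j]\bigr)=[\sigma(x_i),\sigma(x_j)],
\]
extended $\mathbb{F}_p$-linearly; again this is an isomorphism because both sides are bases of $G'$.

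Finally, I verify that $(\phi,\theta)$ makes the Hall diagram commute. For arbitrary $\bar u,\bar v\in\bar G$, write $\bar u=\sum_i a_i\bar x_i$ and $\bar v=\sum_j b_j\bar x_j$. Bilinearity of $a_G$ yields $[u,v]=\prod_{i,j}[x_i,x_j]^{a_ib_j}$, hence
\[
\theta\bigl(a_G(\bar u,\bar v)\bigr)=\prod_{i,j}[\sigma(x_i),\sigma(x_j)]^{a_ib_j}=a_G\bigl(\phi\bar u,\phi\bar v\bigr),
\]
where the last equality uses bilinearity of $a_G$ applied to $\phi\bar u=\sum_i a_i\overline{\sigma(x_i)}$ and $\phi\bar v=\sum_j b_j\overline{\sigma(x_j)}$. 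Thus $(\phi,\theta)$ is an autoclinism of $G$ extending $\sigma$. The only subtle step is the basis assertion in the second paragraph; this is where the defining parameters of $\hat G_n$ (namely $|G|=p^{(n+1)(n+2)/2}$ with $n+1$ generators and $G$ special) are essential, and without this numerical coincidence the map $\theta$ might not be well defined.
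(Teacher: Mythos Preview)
Your argument is correct. The paper does not supply a proof of this lemma, stating only that ``it is not difficult to prove the following result''; your write-up is precisely the standard verification the authors are presumably omitting. The essential point you isolate---that the numerical constraint $|G'|=p^{n(n+1)/2}$ forces the $\binom{n+1}{2}$ commutators $[x_i,x_j]$ of any minimal generating set to be an $\mathbb{F}_p$-basis of $G'$, so that $\theta$ is automatically well defined and bijective---is exactly the content of the lemma, and your use of bilinearity (valid because $G$ has class two and $G'$ has exponent $p$) cleanly handles the commutativity of the Hall diagram.
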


For the groups $G := G_{r}$ defined in \eqref{6eqn1}, the following more general result holds true.

\begin{lemma}\label{ext-automorphism}
A bijection between any two minimal generating sets for  $G := G_{r}$,  extends to an automorphism of $G$.
\end{lemma}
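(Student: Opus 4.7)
The plan is to identify $G := G_r$ as the free nilpotent group of class $2$ and exponent $p$ on $r+1$ generators, and then to invoke its universal property. First I would observe that $G$ has exponent $p$: since $G' = \gen{b_{ij} : 1 \le i < j \le r+1}$ is central and generated by elements of order $p$, it is elementary abelian, so every commutator has order dividing $p$. Combined with the collection formula $(xy)^p = x^p y^p [y,x]^{p(p-1)/2}$, which is valid in any class-$2$ group, and the hypothesis that $p$ is odd, this yields $(xy)^p = x^p y^p$ for all $x,y \in G$, and hence every element of $G$ has order dividing $p$. In particular $\Phi(G) = G'$, and $|G| = p^{(r+1)(r+2)/2}$ matches the order of the free class-$2$ exponent-$p$ group on $r+1$ generators.

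Next, I would verify the universal property directly from the presentation \eqref{6eqn1}: given any map $\sigma \colon \{a_1,\ldots,a_{r+1}\} \to H$ into a group $H$ of nilpotency class at most $2$ and exponent dividing $p$, set $\sigma(b_{ij}) := [\sigma(a_i),\sigma(a_j)]$; then every defining relation of $G$ is satisfied in $H$ (the relations $[a_k, b_{ij}]=1$ follow from class $2$, while the exponent relations $a_i^p = b_{ij}^p = 1$ follow from the exponent hypothesis on $H$), so $\sigma$ extends to a homomorphism $\bar\sigma \colon G \to H$ by von Dyck's theorem. Taking $H = G$ and any tuple $(y_1,\ldots,y_{r+1})$ that generates $G$ minimally, the resulting endomorphism $f_y$ with $f_y(a_i)=y_i$ carries a basis of $G/\Phi(G)$ onto a basis, so $f_y$ is surjective by the Burnside basis theorem, and hence an automorphism as $G$ is finite.

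Finally, given two minimal generating sets $\{x_1,\ldots,x_{r+1}\}$ and $\{y_1,\ldots,y_{r+1}\}$ of $G$ together with the prescribed bijection $x_i \mapsto y_i$, the composite $f_y \circ f_x^{-1}$ is the desired automorphism. No step presents a serious obstacle; the only point requiring a little care is the verification of the universal property, which ultimately boils down to the observation that the defining relations of $G$ beyond the conventions $b_{ij} := [a_i, a_j]$ are automatic consequences of nilpotency class $2$ and exponent $p$.
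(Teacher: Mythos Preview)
Your argument is correct. The paper does not supply a proof of this lemma at all; it is simply stated after the remark ``For the groups $G := G_r$ defined in \eqref{6eqn1}, the following more general result holds true,'' so your write-up fills a gap the authors left as an exercise. Identifying $G_r$ with the relatively free group of nilpotency class $2$ and exponent $p$ on $r+1$ generators, and then invoking its universal property together with the Burnside basis theorem, is exactly the natural approach; each step you outline goes through without difficulty.
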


\section{Key Lemmas}

We start with the following elementary fact, proof of which is immediate from the Hall-Witt identity.

\begin{lemma}\label{lem1}
  Let $G$ be a group of class $3$ and $x,y,z \in $ $G$ such that  $[x,z], [y,z] \in $ $Z(G)$.  Then $[x,y,z] = 1$.
\end{lemma}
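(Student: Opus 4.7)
The plan is to invoke the Hall--Witt identity
$$[x, y^{-1}, z]^y \cdot [y, z^{-1}, x]^z \cdot [z, x^{-1}, y]^x = 1,$$
and observe that each factor is a weight-$3$ commutator and therefore lies in $\gamma_3(G) \leq Z(G)$ (since $G$ has class~$3$). Conjugation by $y$, $z$, $x$ is then trivial on these factors, so the identity reduces to
$$[x, y^{-1}, z] \cdot [y, z^{-1}, x] \cdot [z, x^{-1}, y] = 1.$$

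Next, I would kill two of these three terms using the centrality hypothesis. From the standard identity $[a, b^{-1}] = [b, a]^{b^{-1}}$, one gets $[z, x^{-1}] = [x, z]^{x^{-1}}$ and $[y, z^{-1}] = [z, y]^{z^{-1}}$; since $[x,z]$ and $[y,z]$ lie in $Z(G)$, so do $[z, x^{-1}]$ and $[y, z^{-1}]$. Hence $[z, x^{-1}, y] = 1$ and $[y, z^{-1}, x] = 1$, which leaves
$$[x, y^{-1}, z] = 1.$$

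The last step is to pass from $[x, y^{-1}, z] = 1$ to $[x, y, z] = 1$. From $[x, yy^{-1}] = 1$ one obtains $[x, y^{-1}] = [x, y]^{-1} \cdot w$ where $w \in \gamma_3(G) \leq Z(G)$ (equivalently, use that modulo the central subgroup $\gamma_3(G)$ the commutator is bilinear in the second slot). Then, since $w$ is central,
$$[x, y^{-1}, z] \;=\; [[x, y]^{-1} w,\, z] \;=\; [[x, y]^{-1},\, z]^{\,w}\cdot[w, z] \;=\; [[x, y]^{-1},\, z],$$
and similarly $[[x,y]^{-1}, z] = 1$ forces $[[x,y], z] = 1$, i.e.\ $[x, y, z] = 1$.

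I do not anticipate a real obstacle here: the only place one must be careful is the bookkeeping of conjugations and of the $\gamma_3$-correction between $[x, y^{-1}]$ and $[x,y]^{-1}$, but both are harmless because $\gamma_3(G)$ is central in a class-$3$ group. The proof therefore fits in just a few lines, as predicted by the remark preceding the lemma.
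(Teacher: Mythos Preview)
Your proof is correct and follows exactly the approach indicated in the paper, which simply remarks that the lemma is immediate from the Hall--Witt identity without spelling out the details. Your careful tracking of the $\gamma_3$-corrections and conjugations is precisely what is needed, and nothing more is required.
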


\begin{lemma}\label{lem3}
 Let $G$ be a finite $p$-group of conjugate type $\{1,p^3\}$, $p > 2 $. Then one of following holds: ($i$) $|G'|$ = $p^3$ and $[G : Z(G)]\geq p^4$;
($ii$) $|G'| \geq p^4$ and $[G : Z(G)] = p^4 $.
\end{lemma}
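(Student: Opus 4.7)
The plan is to invoke Proposition \ref{prop8} and eliminate its middle alternative. Since the class sizes are $\{1,p^3\}$, the breadth $b(G)$ equals $3$, so one of the three cases of Proposition \ref{prop8} holds; cases (i) and (iii) match conclusions (i) and (ii) of the lemma respectively, so the whole task reduces to ruling out case (ii) of Proposition \ref{prop8}. So assume such data exist: $H\normaleq G$ of order $p$ with $|G'|=p^4$ and $[G/H:Z(G/H)]=p^3$; automatically $H\leq Z(G)$. First I would show that the preimage $Z$ of $Z(G/H)$ equals $Z(G)$ itself: for $x\in Z$ one has $[x,G]\leq H$, hence $x^G\subseteq xH$ and $|x^G|\leq p$, which by the hypothesis on class sizes forces $x\in Z(G)$. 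Thus $[G:Z(G)]=p^3$.

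Next I would apply Proposition \ref{prop9} with $N=Z(G)$: all non-central classes have the common length $p^3$, so $G/Z(G)$ is either cyclic or of prime exponent. Cyclicity would make $G$ abelian and contradict $|G'|=p^4$, so $G/Z(G)$ has order $p^3$ and exponent $p$. For $p$ odd the only options are $(\mathbb{Z}/p)^3$ and the Heisenberg group. The abelian case is easy: then $G$ is of class $2$, and for any three lifts $x_1,x_2,x_3$ of a basis of $G/Z(G)$, the group $G'$ is generated by the $[x_i,x_j]$ with $i<j$, each of order $p$ since $[x_i,x_j]^p=[x_i^p,x_j]=1$ (as $x_i^p\in Z(G)$); so $|G'|\leq p^3$, contradicting $|G'|=p^4$.

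The main obstacle is the Heisenberg subcase. My plan there is to pick $z\in G$ lifting a generator of $Z(G/Z(G))$, so that $z\in Z_2(G)\setminus Z(G)$ and $[z,G]\leq Z(G)$. Because $[z,h_1]$ is central, the identity $[z,h_1h_2]=[z,h_2][z,h_1]^{h_2}$ collapses to $[z,h_1][z,h_2]$, and hence $\phi_z\colon G\to Z(G)$, $h\mapsto[z,h]$, is a homomorphism. Its kernel is $C_G(z)$; since $|z^G|=p^3=[G:Z(G)]$ and $Z(G)\leq C_G(z)$, the indices force $C_G(z)=Z(G)$. Thus $\phi_z$ descends to an injection $G/Z(G)\hookrightarrow Z(G)$, embedding the non-abelian Heisenberg group into the abelian centre --- impossible. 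This final contradiction closes out case (ii) of Proposition \ref{prop8}, completing the proof.
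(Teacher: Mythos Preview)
Your argument is correct, and the opening half coincides with the paper's: both show that the preimage of $Z(G/H)$ is $Z(G)$ (equivalently $Z(G/H)=Z(G)/H$), hence $[G:Z(G)]=p^3$. From this point the paper finishes in one line: for any $x\in G\setminus Z(G)$ one has $[G:C_G(x)]=p^3=[G:Z(G)]$ with $Z(G)\leq C_G(x)$, so $C_G(x)=Z(G)$; but $x\in C_G(x)\setminus Z(G)$, a contradiction. Your route after establishing $[G:Z(G)]=p^3$ is considerably longer: you invoke Proposition~\ref{prop9} to analyse the structure of $G/Z(G)$ and then split into the cyclic, elementary abelian, and Heisenberg subcases. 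All three subcases are handled correctly, but none of this machinery is needed. Indeed, in your Heisenberg subcase you yourself arrive at $C_G(z)=Z(G)$ for a non-central $z$ and could stop there (since $z\in C_G(z)$); instead you continue to the injection $G/Z(G)\hookrightarrow Z(G)$, which is another valid but more roundabout contradiction. The paper's approach works uniformly for any non-central element and bypasses the case split entirely.
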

\begin{proof}
 Suppose that there exists a normal subgroup $H$ of $G$ such that $|H|$ = $p$ and $[G/H : Z(G/H)] = p^3$. Then, since  $G$ is of conjugate type $\{1,p^3\}$, it follows that $Z(G/H)$ = $Z(G)/H$. Thus $p^3 = [G/H : Z(G/H)] = [G/H : Z(G)/H] = [G : Z(G)]$. But, for all $x$ $\in$ $G - Z(G)$ we get $[G : C_G(x)]  = p^3$ (as $G$ is of conjugate type $\{1,p^3\}$). Since $x \in C_G(x) - Z(G)$, we have $Z(G) < C_G(x)$; contradicting the equality $[G:Z(G)] = [G : C_G(x)] = p^3$. 
Hence there can not exist any $H$ $\vartriangleleft$ $G$ with $|H|$ = $p$ and $[G/H : Z(G/H)] = p^3$. The proof is now complete from Proposition \ref{prop8}. \hfill $\Box$

\end{proof}

We have noticed above that any group from the family $\hat{G}_{n-1}$ is of conjugate type $\{1,p^{n-1}\}$. The following two results  characterize all finite  $n$-generator special $p$-groups of order $p^{n(n+1)/2 -1}$ and conjugate type $\{1,p^{n-1}\}$.

\begin{lemma}\label{lem4}
Let $G \in \hat{G}_{n-1}$ be a group  generated by $n \geq 4 $ elements $a_1$, $a_2$, $\ldots,$ $a_n$. Suppose that $M < Z(G) = G'$ with $|M|$ = $p$. Then  $G/M$ is of conjugate type $\{1,p^{n-1}\}$ if and only if  $M$ can be reduced to the  form
\begin{equation*}
M = \langle [a_1,\; a_2][a_3,\; a_4][a_5,\; a_6] \dots [a_{2m-1},\; a_{2m}] \rangle, \; \mbox{where} \;  2 \leq m \leq \lfloor n/2 \rfloor.
\end{equation*}
\end{lemma}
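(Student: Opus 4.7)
The plan is to realize the commutators of $G$ as values of a bilinear map and reduce the question to the classification of alternating bilinear forms over $\mathbb{F}_p$. Since $G \in \hat{G}_{n-1}$ is special with $Z(G) = G' = \Phi(G)$, the quotient $V := G/Z(G)$ is an $n$-dimensional $\mathbb{F}_p$-space with basis $\{\bar a_1,\ldots,\bar a_n\}$, and $W := G'$ is elementary abelian of rank $n(n-1)/2$ with basis $\{[a_i,a_j] : i<j\}$. I would identify $W$ with $\Lambda^2 V$ via $[a_i,a_j] \leftrightarrow \bar a_i \wedge \bar a_j$, so that the commutator factors as a bilinear map $c : V \times V \to W$ with $c(\bar u,\bar v) = \bar u \wedge \bar v$.

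First I would compute the conjugacy-class size of $xM$ in $G/M$. Starting from $C_{G/M}(xM) = \{gM : [g,x] \in M\}$ and using the bilinearity of $c$, one finds, for $x \in G$ with $\bar x \neq 0$, that the class size equals $p^{n-1}/|M \cap \operatorname{Im}(c_{\bar x})|$, where $c_{\bar x} : V \to W$ is the linear map $\bar y \mapsto \bar x \wedge \bar y$. Because $G$ has conjugate type $\{1,p^{n-1}\}$, we have $\dim \operatorname{Im}(c_{\bar x}) = n-1$ for every $\bar x \ne 0$, and since $\dim M = 1$ while $n-1 \ge 3$, no non-central $x$ becomes central in $G/M$. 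Hence the only possible non-trivial class sizes in $G/M$ are $p^{n-1}$ and $p^{n-2}$, and $G/M$ has conjugate type $\{1,p^{n-1}\}$ if and only if $w \notin \operatorname{Im}(c_{\bar x})$ for every $\bar x \neq 0$, where $M = \langle w \rangle$.

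Under the identification $W \cong \Lambda^2 V$, $\operatorname{Im}(c_{\bar x}) = \bar x \wedge V$, so the condition becomes: $w \ne \bar x \wedge \bar y$ for any $\bar x,\bar y$, i.e., $w$ is not a decomposable bivector, equivalently the alternating form on $V^*$ associated with $w$ has rank at least $4$. The classical normal-form theorem for alternating bilinear forms over $\mathbb{F}_p$ then gives a basis $\{\bar a_1,\ldots,\bar a_n\}$ of $V$ (after relabelling) in which $w = \bar a_1 \wedge \bar a_2 + \cdots + \bar a_{2m-1} \wedge \bar a_{2m}$, with $2m$ the rank of $w$, so $m \le \lfloor n/2 \rfloor$. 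Since $\Phi(G) = Z(G)$, any basis of $V$ lifts to a minimal generating set of $G$, so $M$ takes the displayed multiplicative form in the new generators. Non-decomposability of $w$ translates exactly to $m \ge 2$, completing both directions.

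The main technical hurdle is the class-size computation together with the identification $\operatorname{Im}(c_{\bar x}) = \bar x \wedge V$; once these are in hand, the result reduces to the well-known orbit structure of $\mathrm{GL}(V)$ acting on $\Lambda^2 V$, and the existence of the displayed form is immediate from the standard normal form for alternating forms.
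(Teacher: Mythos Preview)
Your argument is correct and takes a genuinely different, more conceptual route than the paper. The paper proceeds by hand: it writes a generic generator of $M$ in the basic commutators, then applies a sequence of explicit autoclinisms of $G$ (each given by an elementary change of generators, e.g.\ $a_2\mapsto a_2a_3^{-\alpha_{1,3}}\cdots a_n^{-\alpha_{1,n}}$) to kill coefficients two at a time, checking along the way that the step from $M_3$ onward is possible precisely when some $\alpha_{i,j}$ with $i\ge 3$ survives---exactly the non-decomposability condition. You instead identify $G'\cong\Lambda^2 V$ so that the commutator becomes the wedge product, reduce the class-size condition to ``$w$ is not a pure wedge,'' and invoke the normal form for alternating bilinear forms (equivalently, the $\mathrm{GL}(V)$-orbit structure on $\Lambda^2 V$) to obtain the displayed shape in one stroke. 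Your approach is cleaner and explains \emph{why} the reduction terminates in a symplectic-looking product, while the paper's approach has the virtue of being completely explicit about the autoclinisms used and self-contained (no appeal to the theory of alternating forms). Both ultimately rest on Lemma~\ref{ext-autoclinism}: you use it to lift the new basis of $V$ back to a minimal generating set of $G$, the paper uses it to justify each elementary substitution.
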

\begin{proof}
Notice  that $|G| = p^{n(n+1)/2}$. Also notice that any bijection between two minimal generating sets for $G$ extends to an autoclinism of $G$ by Lemma \ref{ext-autoclinism}. Set $\overline{G}$ = $G/M$; then $|\overline{G}|$ = $p^{(n(n+1)/2)-1}$. Since $G$ is of conjugate type $\{1,p^{n-1}\}$ and $|M| = p$, we have that 
\[Z(G)/M = Z(\overline{G}) = \overline{G}'= \Phi(\overline{G})\] is an elementary abelian $p$-group of order $p^{(n(n-1)/2)-1}$. Thus $[\overline{G} : Z(\overline{G})] = p^n$. Hence $\overline{G}$ is of conjugate 
type $\{1,p^{n-1}\}$ if and only if each  non-central element of $\overline{G}$ commutes only with its own powers up to the central elements.

Let $\bar{x},\; \bar{y} \in  \overline{G} - \Z(\overline{G})$ be such that no one is a power of the other (reading modulo $\Z(\overline{G})$).  Then it is not difficult to see that  $[x,y] \neq 1$ in $G$. Hence, if $[\bar{x} , \bar{y}] =1$ in $\overline{G}$, 
then  $[x,y] \in M^{\#}$.  Any given  central subgroup $M_1$ of order $p$, without loss of generality, can be   written as 
\begin{equation*}
M_1 =\langle [a_1,\; a_2][a_1,\; a_3]^{\alpha_{1,3}} \dots [a_{i},\; a_{j}]^{\alpha_{i,j}} \dots [a_{n-1},\; a_n]^{\alpha_{{n-1},n}} \rangle, 
\end{equation*}
where $1\leq i < j \leq n$.
Now applying the autoclinism induced by the map $a_2 \mapsto a_2a_3^{-\alpha_{1,3}} \dots a_n^{-\alpha_{1,n}}$, $a_i \mapsto a_i$ for $i \ne 2$, $M_1$ gets mapped to 
\begin{equation*}
M_2 =\langle [a_1,\; a_2][a_2,\; a_3]^{\alpha_{2,3}} \dots [a_{i},\; a_{j}]^{\alpha_{i,j}} \dots [a_{n-1},\; a_n]^{\alpha_{{n-1},n}} \rangle 
\end{equation*}
with $2\leq i < j \leq n$ and modified $\alpha_{i,j}$. Notice that $G/M_1$ is isoclinic to  $G/M_2$, and therefore  both $G/M_1$ and $G/M_2$ are of the same conjugate type. We now apply another autoclinism induced by the map  $a_1 \mapsto a_1a_3^{\alpha_{2,3}} \dots a_n^{\alpha_{2,n}}$, $a_i \mapsto a_i$ for $i \ne 1$, and see that $M_2$ gets mapped to  
$$M_3 =\langle [a_1,\; a_2][a_3,\; a_4]^{\alpha_{3,4}} \dots [a_{i},\; a_{j}]^{\alpha_{i,j}} \dots [a_{n-1},\; a_n]^{\alpha_{{n-1},n}} \rangle$$
with $3\leq i < j \leq n$ and modified $\alpha_{i, j}$.

 Take $x = a_1^{j_1}a_2^{j_2} \dots a_n^{j_n}$ and $y = a_1^{k_1}a_2^{k_2} \dots a_n^{k_n}$ be such that none is power of the other (reading modulo $\Z(G)$).  Then $[x,y] \in M_3^{\#}$  only when the following two  conditions  hold true:

 ($i$) $j_3$ = $j_4$ = $\cdots$ = $j_n$ = $k_3$ = $k_4$ = $\cdots$ = $k_n$ = $0$;

 ($ii$) $\alpha_{i,j}$ = $0$,  $\;\;3\leq i < j \leq n$.\\
 Hence $G/M_3$ is of conjugate type $\{1,p^{n-1}\}$ if and only if at least one $\alpha_{i, j}$ is non-zero modulo $p$. We now conclude that  $G/M_3$ is of conjugate type $\{1,p^{n-1}\}$ if and only if
$$M_3 =\langle [a_1,\; a_2][a_3,\; a_4]^{\alpha_{3,4}} \cdots [a_{i},\; a_{j}]^{\alpha_{i,j}} \cdots [a_{n-1},\; a_n]^{\alpha_{{n-1},n}} \rangle$$
with $3\leq i < j \leq n$ and at least one $\alpha_{i, j}$ is non-zero modulo $p$. We can assume that $\alpha_{3,4} \ne 0$.

Make further reductions by applying an autoclinism of $G$ which is a composition of the autoclinisms induced by the maps (1)  $a_4\mapsto a_4^{\alpha_{3,4}^{-1}}$, $a_i \mapsto a_i$ for $i \ne 4$, (2) $a_4 \mapsto a_4a_5^{-\alpha_{3,5}}a_6^{-\alpha_{3,6}} \cdots a_n^{-\alpha_{3,n}}$, $a_i \mapsto a_i$ for $i \ne 4$ and (3) $a_3 \mapsto a_3a_5^{\alpha_{4,5}}a_6^{\alpha_{4,6}} \cdots a_n^{\alpha_{4,n}}$, $a_i \mapsto a_i$ for $i \ne 3$. This action reduces $M_3$ to $M_4$, which is of the following form
$$M_4 =\langle [a_1,\; a_2][a_3,\; a_4][a_5,\; a_6]^{\alpha_{5,6}} \dots [a_{i},\; a_{j}]^{\alpha_{i, j}} \dots [a_{n-1},\; a_n]^{\alpha_{{n-1},n}} \rangle$$
with $5\leq i <  j \leq n$ and modified $\alpha_{i, j}$ again.

Finally, if all $\alpha_{i, j}$ are 0 modulo $p$, we are done by taking $M=M_4$. If not, then finite  repetitions of  the above process reduce $M_4$ to the desired form $M$, completing the proof. \hfill $\Box$
 
\end{proof}

\begin{cor}\label{corollary1}
Let $K$ be an   $n$-generator special $p$-group of order $p^{n(n+1)/2 -1}$ and conjugate type $\{1,p^{n-1}\}$. Then $K$ is isoclinic to $G/M$, where $G = \gen{a_1, a_2, \ldots, a_n}$ $\in$ $\hat{G}_{n-1}$ and $M < Z(G) = G'$ with $|M| = p$ is of the form
$$M = \langle [a_1,\; a_2][a_3,\; a_4][a_5,\; a_6] \cdots [a_{2m-1},\; a_{2m}] \rangle, \; \mbox{where} \;  2 \leq m \leq \lfloor n/2 \rfloor.$$
\end{cor}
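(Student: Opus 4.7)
The plan is to realize $K$, up to isoclinism, as a quotient $G/M$ of some $G\in\hat{G}_{n-1}$ by a central subgroup of order $p$, and then appeal to Lemma~\ref{lem4} to bring $M$ into the canonical form.

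First I would record the numerology. Since $K$ is special and $n$-generator, $\Z(K)=K'=\Phi(K)$ is elementary abelian with $|K/K'|=p^n$, and the hypothesis $|K|=p^{n(n+1)/2-1}$ forces $|K'|=p^{n(n-1)/2-1}$ and $[K:\Z(K)]=p^n$; in particular $K$ has class $2$.

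Next I would build an explicit isoclinism $G/M\to K$. Fix any $G=\gen{a_1,\ldots,a_n}$ in $\hat{G}_{n-1}$ and any minimal generating set $\{x_1,\ldots,x_n\}$ of $K$. Since $G/\Z(G)$ and $K/\Z(K)$ are both elementary abelian of rank $n$, the rule $a_i\Z(G)\mapsto x_i\Z(K)$ extends to an isomorphism $\phi$. In $G$ the $\binom{n}{2}$ commutators $[a_i,a_j]$ with $i<j$ form an $\mathbb{F}_p$-basis of the elementary abelian group $G'$ of order $p^{n(n-1)/2}$, so the assignment $[a_i,a_j]\mapsto[x_i,x_j]$ extends to a surjective $\mathbb{F}_p$-linear map $\theta\colon G'\onto K'$ whose kernel $M$ has order $p$ and is contained in $\Z(G)=G'$. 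Because both groups have class $2$, their commutator maps are bilinear modulo the centre, so agreement on generators propagates to the full isoclinism square. A short argument using $|[x,G]|=p^{n-1}\ge p^2$ for $x\notin\Z(G)$ shows $\Z(G/M)=\Z(G)/M$, and hence $(\phi,\theta)$ factors through $G/M$, yielding an isoclinism $G/M\to K$.

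Finally I would normalize $M$. By Proposition~\ref{prop4}, $G/M$ inherits the conjugate type $\{1,p^{n-1}\}$ from $K$, so Lemma~\ref{lem4} applies and supplies an autoclinism of $G$ carrying $M$ to some $M_0=\gen{[a_1,a_2][a_3,a_4]\cdots[a_{2m-1},a_{2m}]}$ with $2\le m\le\lfloor n/2\rfloor$. This autoclinism descends to an isoclinism $G/M\to G/M_0$, whose composition with the previous isoclinism gives the required isoclinism $K\to G/M_0$. The hard point of the argument is the middle step, where one must verify that $\theta$ is well defined with kernel of order exactly $p$; this rests on the $\mathbb{F}_p$-independence of the commutators $[a_i,a_j]$ in $G\in\hat{G}_{n-1}$, which is the defining feature of that family, together with the order comparison $|G'|=p\,|K'|$.
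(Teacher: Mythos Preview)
Your proof is correct and follows the same strategy as the paper: exhibit $K$ as (isoclinic to) a quotient $G/M$ with $G\in\hat{G}_{n-1}$ and $|M|=p$, then invoke Lemma~\ref{lem4}. The only difference is cosmetic: the paper asserts in one line that $K$ is \emph{isomorphic} to such a quotient (for a suitably chosen $G$ matching the $p$-th power relations of $K$), whereas you fix an arbitrary $G$ in the family and construct the isoclinism explicitly from the $\mathbb{F}_p$-basis of commutators; both routes are valid and land on the same application of Lemma~\ref{lem4}.
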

\begin{proof}
Notice that the group $K$, given  in the statement, is isomorphic to a quotient  of some group $G$ from the family   $\hat{G}_{n-1}$ by a subgroup of order $p$ contained in $G'$. Now the proof follows from the preceding lemma. \hfill $\Box$

\end{proof}

In particular, if, for an odd prime $p$,  we take a $p$-group $G$ from the class $\hat{G}_{n-1}$ such that the exponent of $G$ is $p$, then $G$ is isomorphic to the group $G_{n-1}$ defined in  \eqref{6eqn1}. Then by Lemma \ref{ext-automorphism}
a bijection between any two minimal generating sets for  $G := G_{n-1}$,  extends to an automorphism of $G$. Therefore, on the lines of the proofs of Lemma \ref{lem4} and Corollary \ref{corollary1} (replacing autoclinism by automorphism and isoclinic by isomorphic), we can prove the following.

\begin{lemma}
Let $K$ be an   $n$-generator special $p$-group of order $p^{n(n+1)/2 -1}$, exponent $p$ and conjugate type $\{1,p^{n-1}\}$, where $p$ is an odd prime. Then $K$ is isomorphic to $G_{n-1}/M$, where $G_{n-1}$ is the group as defined  in  \eqref{6eqn1}  generated by $a_1$, $a_2$, $\ldots,$ $a_n$ and $M < Z(G) = G'$ with $|M| = p$ is of the form
$$M = \langle [a_1,\; a_2][a_3,\; a_4][a_5,\; a_6] \dots [a_{2m-1},\; a_{2m}] \rangle, \; \mbox{where} \;  2 \leq m \leq \lfloor n/2 \rfloor.$$
\end{lemma}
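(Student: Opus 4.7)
The plan is to mirror the proofs of Lemma \ref{lem4} and Corollary \ref{corollary1} verbatim, with two substitutions: replace each appeal to Lemma \ref{ext-autoclinism} by an appeal to Lemma \ref{ext-automorphism}, and replace the word ``isoclinic'' by ``isomorphic'' throughout. Since $K$ is $n$-generated, special of order $p^{n(n+1)/2-1}$ and exponent $p$, with $p$ odd, we have $[K : Z(K)] = p^n$ and $|K'| = p^{n(n-1)/2-1}$, which is exactly one less than the rank $n(n-1)/2$ of the commutator subgroup of the free class-$2$ exponent-$p$ group on $n$ generators.

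First I would identify the universal cover. Since $K$ has exponent $p$ and class $2$, it is a quotient of $G_{n-1}$ (the group from \eqref{6eqn1} with $r = n-1$, which lies in $\hat{G}_{n-1}$, has exponent $p$, and is the ``free'' $n$-generator special $p$-group of exponent $p$ in its family). Any surjection $\varphi : G_{n-1} \onto K$ sends a minimal generating set to a minimal generating set, and its kernel $M$ must lie in $G_{n-1}' = Z(G_{n-1})$ because $K/K'$ and $G_{n-1}/G_{n-1}'$ are both elementary abelian of rank $n$; a cardinality count forces $|M| = p$. Thus $K \cong G_{n-1}/M$ with $M < Z(G_{n-1}) = G_{n-1}'$ of order $p$.

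The remaining task is to normalize $M$ to the desired form. Write $M = \langle \prod_{1\le i<j\le n}[a_i,a_j]^{\alpha_{i,j}} \rangle$. By Lemma \ref{ext-automorphism}, every bijection between minimal generating sets of $G_{n-1}$ extends to an \emph{automorphism} of $G_{n-1}$; consequently every linear substitution $a_i \mapsto a_i \prod_{k\ne i} a_k^{\beta_{i,k}}$ (which is a bijection on the Frattini quotient) extends to an automorphism. Applying such an automorphism sends $M$ to an isomorphic quotient-defining subgroup, so the hypothesis that $K$ has conjugate type $\{1,p^{n-1}\}$ is preserved at every step. I would now run the exact sequence of generator substitutions used in the proof of Lemma \ref{lem4}: successively eliminate the coefficients $\alpha_{1,k}$ for $k\ge 3$, then $\alpha_{2,k}$ for $k\ge 3$, obtaining $[a_1,a_2]$ times commutators among $\{a_3,\dots,a_n\}$; use the conjugate-type hypothesis exactly as in Lemma \ref{lem4} to conclude that at least one $\alpha_{i,j}$ with $3\le i<j\le n$ must be nonzero, rescale so that $\alpha_{3,4}=1$, clean up $\alpha_{3,k}$ and $\alpha_{4,k}$ for $k\ge 5$, and iterate. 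After at most $\lfloor n/2 \rfloor$ rounds the process terminates with $M$ in the prescribed form, and by construction the resulting $G_{n-1}/M$ is isomorphic (not merely isoclinic) to $K$.

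The main obstacle — which in fact is not an obstacle at all thanks to Lemma \ref{ext-automorphism} — is upgrading Lemma \ref{lem4} from the isoclinism conclusion to an isomorphism conclusion. The usual danger when passing from ``up to isoclinism'' to ``up to isomorphism'' is that autoclinisms need not lift to automorphisms; here that danger is removed because Lemma \ref{ext-automorphism} is a genuinely stronger statement for $G_{n-1}$ than Lemma \ref{ext-autoclinism} is for an arbitrary member of $\hat{G}_{n-1}$. Beyond that, the only bookkeeping is verifying at each step of the reduction that the linear substitution really clears the intended coefficients without re-polluting the previously normalized ones, which is already carried out explicitly in the proof of Lemma \ref{lem4}.
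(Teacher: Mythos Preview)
Your proposal is correct and follows exactly the approach the paper itself takes: the paper's proof of this lemma is nothing more than the instruction to rerun the proofs of Lemma \ref{lem4} and Corollary \ref{corollary1} with ``autoclinism'' replaced by ``automorphism'' (via Lemma \ref{ext-automorphism}) and ``isoclinic'' by ``isomorphic''. Your write-up in fact supplies more detail than the paper does, including the explicit identification of $K$ as a quotient $G_{n-1}/M$ with $|M|=p$, but the strategy is identical.
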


For the case $n =3$, the preceding lemma was proved by Brahana \cite{Brahana40}. For the application point of view, we state it explicitly as a corollary.

\begin{cor}\label{lem6}
Let $K$ be a  $4$-generator special $p$-group of order $p^{9}$, exponent $p$ and conjugate type $\{1,p^{3}\}$, where $p$ is an odd prime.  Then $K$ is isomorphic to $G_3/M$, where  $G_3$ is generated by  $a,b,c$, $d$ and  $M < Z(G) = G'$ with $|M| = p$ is of the form   $H = \gen{[a,\; b][c,\; d]}$.
\end{cor}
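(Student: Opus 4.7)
The plan is to invoke the preceding unnamed lemma directly with the parameter $n=4$. The hypotheses match exactly: $K$ is a $4$-generator special $p$-group of order $p^{9} = p^{4(4+1)/2-1}$, of exponent $p$ and conjugate type $\{1,p^{3}\}$, with $p$ an odd prime.

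First I would apply that lemma to conclude that $K \cong G_3/M$, where $G_3$ (which is the group from \eqref{6eqn1} for $r=3$, generated by four elements $a_1, a_2, a_3, a_4$) admits a normal subgroup $M < \Z(G_3) = G_3'$ of order $p$ of the form
$$M = \langle [a_1, a_2][a_3, a_4]\cdots [a_{2m-1}, a_{2m}] \rangle$$
for some integer $m$ in the range $2 \le m \le \lfloor 4/2 \rfloor = 2$.

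Next I would observe that this range collapses to the single value $m = 2$, so $M$ is forced to be $\langle [a_1, a_2][a_3, a_4] \rangle$. Renaming the generators via the bijection $(a_1, a_2, a_3, a_4) \mapsto (a, b, c, d)$, which extends to an automorphism of $G_3$ by Lemma \ref{ext-automorphism}, yields $M = \langle [a,b][c,d] \rangle$, as claimed.

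There is essentially no obstacle here; the corollary is simply the $n=4$ specialization of the preceding lemma, in which the constraint $\lfloor n/2 \rfloor = 2$ leaves only one admissible form for $M$. All the substantive work was already done in Lemma \ref{lem4}, Corollary \ref{corollary1}, and the transfer from autoclinisms to automorphisms via Lemma \ref{ext-automorphism}.
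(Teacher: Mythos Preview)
Your proposal is correct and matches the paper's approach exactly: the paper explicitly states Corollary~\ref{lem6} as the specialization of the preceding unnamed lemma to the case $n=4$, where the range $2 \le m \le \lfloor 4/2 \rfloor$ forces $m=2$ and hence $M = \gen{[a,\;b][c,\;d]}$.
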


\smallskip

Now onward we concentrate only on the groups $G$ from the family $\hat{G}_{3}$.

\begin{lemma}\label{lem7}
Let $G$ be a group from the family $\hat{G}_3$which is generated by  $a,b,c$ and $d$. Suppose that $N < Z(G) = G'$ with $|N| = p^2$. Then $G/N$ is of conjugate type $\{1,p^3\}$ if and only if  $N$ can be reduced to the following form
\begin{equation*}
N = \gen{ [a,\;b][c,\;d],\; [a,\;c][b,\;d]^r},\; \mbox{where}\; r\; \mbox{is any fixed non-square integer modulo}\; p.
\end{equation*}
\end{lemma}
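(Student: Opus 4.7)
The strategy mirrors Lemma \ref{lem4}, but now we reduce a $2$-dimensional subspace $N$ of $G'$ rather than a line. Identify $V = G/\Z(G) \cong \mathbb{F}_p^4$ with basis $\bar a, \bar b, \bar c, \bar d$, and, setting $a_1 = a,\, a_2 = b,\, a_3 = c,\, a_4 = d$, use the six commutators $[a_i, a_j]$ ($1 \le i < j \le 4$) as a basis of $G' = \Z(G) \cong \mathbb{F}_p^6$. Call $\omega = \sum \lambda_{ij}[a_i, a_j] \in G'$ \emph{decomposable} if $\omega = [u, v]$ for some $u, v \in G$; a direct calculation shows this holds iff the Pfaffian
\[
q(\omega) := \lambda_{12}\lambda_{34} - \lambda_{13}\lambda_{24} + \lambda_{14}\lambda_{23}
\]
vanishes. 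The first step is to translate the conjugate-type condition into the following form: \emph{$G/N$ is of conjugate type $\{1, p^3\}$ if and only if $q|_N$ is anisotropic}, equivalently $N$ contains no non-trivial decomposable element. Indeed, since $\dim N = 2 < 3 = \dim\{[v, x] : v \in G\}$ for every $x \notin \Z(G)$, one has $\Z(G/N) = \Z(G)/N$ and $|(G/N)/\Z(G/N)| = p^4$; and for non-central $\bar x$ the centralizer index in $G/N$ equals $p^{3 - \dim(N \cap \{[v,x]: v \in G\})}$, which is $p^3$ for every such $\bar x$ exactly when $N$ meets no set $\{[v,x]: v \in G\}$ non-trivially.

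The second step is to reduce a chosen non-trivial $\omega_1 \in N$ to a canonical form via autoclinisms from Lemma \ref{ext-autoclinism}, which realize arbitrary $\mathrm{GL}(V)$-transformations on $G'$. By the classical normal form for alternating $2$-forms on a $4$-dimensional space, $\omega_1$ is $\mathrm{GL}(V)$-equivalent either to $[a, b]$ (decomposable, rank $2$) or to $[a, b][c, d]$ (non-decomposable, rank $4$). The first case is incompatible with anisotropy of $q|_N$, so I take $\omega_1 = [a, b][c, d]$.

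Finally, pick $\omega_2 \in N$ so that $\{\omega_1, \omega_2\}$ spans $N$. Replacing $\omega_2$ by $\omega_2 \omega_1^{-\lambda}$ for appropriate $\lambda$, I may assume $B(\omega_1, \omega_2) = 0$, where $B$ is the polarization of $q$; then $\omega_2$ lies in the $5$-dimensional orthogonal complement $\omega_1^\perp$. The stabilizer of $\omega_1$ in $\mathrm{GL}(V)$ is the symplectic group $\mathrm{Sp}_4$ relative to the dual symplectic form on $V$, and it surjects onto $\mathrm{SO}(\omega_1^\perp, q) \cong \mathrm{SO}_5$ via the exceptional isomorphism; hence by Witt's theorem it acts transitively on non-trivial vectors of each fixed $q$-value in $\omega_1^\perp$. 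Combining this with scaling $\omega_2$ by $t \in \mathbb{F}_p^*$ (which scales $q(\omega_2)$ by $t^2$), $\omega_2$ reduces to $[a, c][b, d]^r$ for any prescribed non-square $r$ modulo $p$. A direct Pfaffian computation then yields
\[
q(\omega_1^s \omega_2^t) = s^2 - r t^2,
\]
which is anisotropic precisely when $r$ is a non-square, establishing the lemma. The main obstacle is this last reduction of $\omega_2$; one can also carry it out in the explicit style of Lemma \ref{lem4} by a chain of substitutions such as $a \mapsto a + \mu b$, $c \mapsto c + \nu d$, and the plane-swap $(a, b) \leftrightarrow (c, d)$, tracking the coefficients $\beta_{ij}$ of $\omega_2 = \prod [a_i, a_j]^{\beta_{ij}}$ until only the $[a, c]$ and $[b, d]$ terms survive.
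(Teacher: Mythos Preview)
Your argument is correct and takes a genuinely different, more structural route than the paper. The paper works entirely by explicit coordinate substitutions: starting from a general $N_1$ of order $p^2$ it applies a chain of concrete autoclinisms (e.g.\ $b\mapsto bd^{-i_1}$, $c\mapsto cd^{-j_1}$, then $a\mapsto ac^{i_1}d^{i_2}$, and so on) to reach the shape $N_5=\langle[a,b][c,d],\,[a,c][b,d]^{i_1}[c,d]^{i_2}\rangle$; it then tests directly when two non-central cosets can commute in $G/N_5$, obtaining the quadratic $k^2 i_1-k i_2-1\equiv 0\pmod p$ whose discriminant $i_2^2+4i_1$ must be a non-square, and finishes with two further explicit substitutions to eliminate $i_2$. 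Your version instead identifies $G'\cong\Lambda^2 V$ equipped with the Pfaffian form $q$, reads the conjugate-type condition as anisotropy of $q|_N$, and invokes the normal form for alternating bilinear forms together with the isogeny $\mathrm{Sp}_4\to\mathrm{O}_5$ to normalize $N$ in one stroke. This makes transparent \emph{why} the non-square appears (it is exactly the anisotropy condition for the binary form $s^2-rt^2$) and places the lemma in the classical Klein--Pl\"ucker picture; the paper's approach, by contrast, is entirely self-contained and needs no background on classical groups over finite fields. One point to tighten: over $\mathbb{F}_p$ the image of $\mathrm{Sp}_4(\mathbb{F}_p)$ in $\mathrm{O}_5(\mathbb{F}_p)$ is $\Omega_5(\mathbb{F}_p)$ rather than $\mathrm{SO}_5(\mathbb{F}_p)$, so Witt's theorem for the full orthogonal group does not literally apply; you should either check that $\Omega_5$ is already transitive on anisotropic vectors of each fixed norm (it is, since the stabilizer $\mathrm{O}(v^\perp)\cong\mathrm{O}_4$ of such a vector surjects onto $\mathrm{O}_5/\Omega_5$), or enlarge to $\mathrm{GSp}_4$, the stabilizer of the line $\langle\omega_1\rangle$, which is all your subspace argument actually requires.
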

\begin{proof}
Notice  that $|G| = p^{10}$. Set $\overline{G}$ = $G/N$; then $|\overline{G}|$ = $p^8$.  Since $G$ is of conjugate type $\{1,p^3\}$ and $|N| = p^2$, we have that \[Z(G)/N = Z(\overline{G}) =  \overline{G}'= \Phi(\overline{G})\] is an elementary abelian $p$-group of order $p^4$. Thus $[\overline{G} : Z(\overline{G})] = p^4$. Hence $\overline{G}$ is of conjugate type $\{1,p^3\}$ if and only if each non-central element  of $\overline{G}$ commutes only with its own powers up to the central elements.

Let $\bar{x},\; \bar{y} \in  \overline{G} - \Z(\overline{G})$ be such that no one is a power of the other (reading modulo $\Z(\overline{G})$). Then it is not difficult to see that  $[x,y] \neq 1$ in $G$. Hence, 
if $[\bar{x} , \bar{y}] =1$ in $\overline{G}$, then  $[x,y] \in N^{\#}$.  Any given  central subgroup $N_1$ of order $p^2$, without loss of generality,  can be written as one of the following two types:
\begin{eqnarray*}
(i) \;\;\; N_1 &=& \langle [a,\;b][a,\;d]^{i_1}[b,\;c]^{i_2}[b,\;d]^{i_3}[c,\;d]^{i_4},\; [c,\; d] \rangle.\\
(ii) \;\;\; N_1 &=& \langle [a,\;b][a,\;d]^{i_1}[b,\;c]^{i_2}[b,\;d]^{i_3}[c,\;d]^{i_4},\; [a,\;c][a,\;d]^{j_1}[b,\;c]^{j_2}[b,\;d]^{j_3}[c,\;d]^{j_4} \rangle.
\end{eqnarray*}

If $N_1$ is  of type (i), then $\bar{c}$ commutes with $\bar{d}$, although $\bar{c}$ $\notin \langle Z(\overline{G}), \bar{d} \rangle$. Hence $\overline{G}$ can not  be of conjugate type $\{1,p^3\}$. 
Therefore we only need to consider $N_1$ as in type (ii). Now applying the autoclinism induced by the map $a\mapsto a$, $b\mapsto bd^{-i_1}$, $c\mapsto cd^{-j_1}$, $d\mapsto d$, $N_1$ gets mapped to $N_2$, where
\begin{equation*}
N_2 = \langle [a,\;b][b,\;c]^{i_1}[b,\;d]^{i_2}[c,\;d]^{i_3},\; [a,\;c][b,\;c]^{j_1}[b,\;d]^{j_2}[c,\;d]^{j_3} \rangle
\end{equation*}
with modified powers of the basic commutators. Notice that $G/N_1$ and $G/N_2$ are isoclinic.
We now apply another autoclinism induced by the map $a\mapsto ac^{i_1}d^{i_2}$, $b\mapsto b$, $c\mapsto c$, $d\mapsto d$, and see that $N_2$ gets mapped to 
\begin{equation*}
N_3 = \langle [a,\;b][c,\;d]^i,\; [a,\;c][b,\;c]^{j_1}[b,\;d]^{j_2}[c,\;d]^{j_3} \rangle
\end{equation*}
 again with modified powers of commutators.
Note that $i$ is non-zero modulo $p$, otherwise $G/N_3$ can not be of conjugate type $\{1,p^3\}$. Thus the map $a\mapsto a$, $b\mapsto b$, $c\mapsto c$, $d\mapsto d^{i^{-1}}$ extends to an autoclinism of $G$ and maps $N_3$ to 
\begin{equation*}
N_4 = \langle [a,\;b][c,\;d],\; [a,\;c][b,\;c]^{j_1}[b,\;d]^{j_2}[c,\;d]^{j_3} \rangle
\end{equation*}
with modified $j_2$ and $j_3$.
Again note that $j_2$ can not be zero modulo $p$, otherwise $[ab^{j_1}d^{-j_3},\;c] = 1$ and so $G/N_3$ can not be of conjugate type $\{1,p^3\}$. Therefore the map $a\mapsto a$, $b\mapsto b$, $c\mapsto c$,  $d\mapsto c^{-j_1j_2^{-1}}d$ is well  defined. The autoclinism of $G$ induced by this map takes $N_4$ to $N_5$, where, after modifying powers, 
\begin{equation*}
N_5 = \langle [a,\;b][c,\;d],\; [a,\;c][b,\;d]^{i_1}[c,\;d]^{i_2} \rangle.
\end{equation*}

Now let $x = a^{j_1}b^{j_2}c^{j_3}d^{j_4}$ and $y = a^{k_1}b^{k_2}c^{k_3}d^{k_4}$ be such that none is power of the other (reading modulo $\Z(G)$). If $[x,\;y] \in N_5^{\#}$, then at least one of $j_1$ and $k_1$ has to be non-zero modulo $p$.
Without loss of generality we take $j_1$ to be non-zero. Now we can write $y$ as $y = a^{k_1}b^{k_2}c^{k_3}d^{k_4} = (a^{j_1}b^{j_2}c^{j_3}d^{j_4})^{k_1j_1^{-1}}b^{l_2}c^{l_3}d^{l_4}z_1$, where $z_1 \in Z(G)$ and $l_2,\;l_3,\;l_4$ are some suitable integers. So we can modify $x$ and $y$ 
as $x = a^{j_1}b^{j_2}c^{j_3}d^{j_4}$ with $j_1$ non-zero and $y = b^{l_2}c^{l_3}d^{l_4}$. Now $l_2$ has to be non-zero modulo $p$ and $l_4$ has to be $0$. Using similar argument, we can remove power of $b$ in $x$. So we can modify $x$ and
$y$ by $x = a^{j_1}c^{j_3}d^{j_4}$ and $y = b^{l_2}c^{l_3}$. Now $j_3$ has to be $0$. So, finally we have reduced $x$ and $y$ to $x = a^{j_1}d^{j_4}$ and $y = b^{l_2}c^{l_3}$. If $[x,\;y] \in N_5^{\#}$, then $[x^{j_1^{-1}},\;y^{l_2^{-1}}]$ 
also belongs to $N_5^{\#}$. Also $x^{j_1^{-1}} = ad^jz_2$ and $y^{l_2^{-1}} = bc^kz_3$, where $z_2$ and $z_3$ are some central elements. Therefore $[x^{j_1^{-1}},\;y^{l_2^{-1}}]$ = $[a,\;b][a,\;c]^k[b,\;d]^j[c,\;d]^{jk}$. So if 
$[x,\;y] \in N_5^{\#}$, then $[a,\;b][a,\;c]^k[b,\;d]^j[c,\;d]^{jk}$ $\in N_5^{\#}$, and therefore can be written as a product of powers of generators of $N_5^{\#}$. Now comparing power of the basic commutators, we get
\begin{equation*}
j \equiv ki_1 \;(\mbox{mod}\; p)\; \mbox{and}\; jk\equiv ki_2 + 1\; (\mbox{mod}\; p).
\end{equation*}
Solving these we have
\begin{equation*}
k^2i_1 - ki_2 - 1 \equiv 0 \;( \mbox{mod}\; p).
\end{equation*}
This is possible only when $i_2^2$ $+$ $4i_1$ is a square modulo $p$.  From this, we conclude that  $G/N_5$ is of conjugate type $\{1,p^3\}$ if and only if  $N_5$ is of the  following form
\begin{equation*}
N_5 = \langle [a,\;b][c,\;d],\; [a,\;c][b,\;d]^{i_1}[c,\;d]^{i_2} \rangle; \; \mbox{where}\; i_2^2 + 4i_1\; \mbox{is a non-square modulo}\; p.
\end{equation*}

Now we consider two cases, namely: Case 1. $i_2 \ne 0$; Case 2. $i_2  = 0$, and take these one by one.\\

\noindent\textbf{Case 1:}   Let $r$ be a fixed integer non-square modulo $p$. Then $r$ must be non-zero.  Being non-square,  $i_2^2 + 4i_1$ is also non-zero. Thus $\frac {i_2^2 + 4i_1} {4r}$ is a non-zero square modulo $p$.  Thus there exists a 
non-zero $l$ such that  $l^2 = \frac {i_2^2 + 4i_1} {4r}$. Set  $t = \frac {i_2} {2}$.

Now applying the autoclinism of $G$ induced by the map
$a\mapsto a^ld^t$, $b\mapsto b$, $c\mapsto b^tc^l$, $d \mapsto d$, $N_5$ gets mapped to 
\begin{eqnarray*}
N_6  &=& \langle [a,\;b]^l[c,\;d]^l,\; [a,\;b]^{lt}[a,\;c]^{l^2}[b,\;d]^{ti_2 + i_1 - t^2}[c,\;d]^{li_2 - lt} \rangle\\
&=& \langle ([a,\;b][c,\;d])^l,\; ([a,\;b][c,\;d])^{lt}([a,\;c]^{l^2}[b,\;d]^{ti_2 + i_1 - t^2}) \rangle\\
&=& \langle [a,\;b][c,\;d],\; [a,\;c]^{l^2}[b,\;d]^{ti_2 + i_1 - t^2} \rangle\\
&=& \langle [a,\;b][c,\;d],\; [a,\;c]^{\frac {i_2^2 + 4i_1} {4r}}[b,\;d]^{\frac {i_2^2 + 4i_1} {4}} \rangle\\
&=& \langle [a,\;b][c,\;d],\; ([a,\;c][b,\;d]^r)^{\frac {i_2^2 + 4i_1} {4r}} \rangle\\
&=& \langle [a,\;b][c,\;d],\; [a,\;c][b,\;d]^r \rangle\\
&=& N.
\end{eqnarray*}

 Hence we are done in this case.\\

\noindent{\bf Case 2:}   In this case $i_1$ must be a non-square. If $i_1 = r$, then we are done. If not, then  $i_1^{-1}r$ must be a non-zero square, and therefore there exists a non-zero integer $l$ such that   $i_1^{-1}r = l^2$.

 Now the autoclinism of $G$ induced by the map  $a\mapsto a$, $b\mapsto b$, $c\mapsto c^{l^{-1}}$, $d\mapsto d^l$ maps $N_5$ to
\begin{eqnarray*}
N_7 &=&  \langle [a,\;b][c,\;d],\; [a,\;c]^{l^{-1}}[b,\;d]^{li_1} \rangle\\
&=& \langle [a,\;b][c,\;d],\; ([a,\;c][b,\;d]^{l^2i_1})^{l^{-1}} \rangle\\
&=& \langle [a,\;b][c,\;d],\; [a,\;c][b,\;d]^{l^2i_1} \rangle\\
&=& \langle [a,\;b][c,\;d],\; [a,\;c][b,\;d]^r \rangle\\
&=& N.
\end{eqnarray*}
The proof is  now complete.  \hfill $\Box$

\end{proof}

The following result characterizes all $4$-generator special groups of order $p^8$ and conjugate type $\{1,p^{3}\}$.

\begin{cor}\label{corollary2}
Let $K$ be a  $4$-generator  special $p$-group of order  $p^{8}$ and conjugate type $\{1,p^{3}\}$. Then $K$ is isoclinic to $G/N$, where $G = \gen{a,\;b,\;c,\;d}$ $\in$ $\hat{G}_{3}$ and   $N < Z(G) = G'$ with $|N| = p^2$ is of the form
$$N = \gen{ [a,\;b][c,\;d],\; [a,\;c][b,\;d]^r},\; \mbox{where}\; r\; \mbox{is any fixed non-square integer modulo}\; p.$$
\end{cor}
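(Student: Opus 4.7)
The plan is to reduce Corollary \ref{corollary2} to Lemma \ref{lem7}, following the same strategy already used for Corollary \ref{corollary1}: realize $K$ as isoclinic to a quotient $G/N$, where $G \in \hat{G}_3$ and $N \leq G'$ has order $p^2$, and then appeal to Lemma \ref{lem7} to put $N$ into canonical form.

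First I would record the invariants. Since $K$ is a $4$-generator special $p$-group of order $p^8$, both $K/Z(K)$ and $K' = Z(K)$ are elementary abelian of rank $4$; and because $K$ has class $2$, the commutator descends to a surjective alternating bilinear map $c_K : \Lambda^2(K/Z(K)) \to K'$ whose kernel has $\mathbb{F}_p$-dimension $2$. Fix any $G \in \hat{G}_3$ with minimal generating set $\{a,b,c,d\}$; then $G/Z(G)$ is elementary abelian of rank $4$, $G' = Z(G)$ has order $p^6$, and the six basic commutators identify $c_G : \Lambda^2(G/Z(G)) \to G'$ as an $\mathbb{F}_p$-isomorphism.

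Next I would build the isoclinism. Pick a minimal generating set $\{x_1,x_2,x_3,x_4\}$ of $K$, let $\phi : G/Z(G) \to K/Z(K)$ be the $\mathbb{F}_p$-linear isomorphism sending the images of $a,b,c,d$ to those of $x_1,x_2,x_3,x_4$, and set $N = c_G(\ker(c_K \circ \Lambda^2 \phi)) \leq G'$, a subgroup of order $p^2$. A short computation using the fact that the radical of $c_K$ is trivial (which holds because $Z(K) = K'$) shows that $Z(G/N) = Z(G)/N$, so $\phi$ descends to an isomorphism $\bar\phi : (G/N)/Z(G/N) \to K/Z(K)$, and $c_G$ descends to an isomorphism $\theta : G'/N \to K'$; the pair $(\bar\phi,\theta)$ intertwines the commutator pairings of $G/N$ and $K$ by construction, hence is an isoclinism. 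Proposition \ref{prop4} then forces $G/N$ to be of conjugate type $\{1, p^3\}$, so Lemma \ref{lem7} yields an autoclinism of $G$ carrying $N$ to the canonical form $N_0 = \langle [a,b][c,d],\; [a,c][b,d]^r\rangle$ with $r$ a fixed non-square modulo $p$. Since an autoclinism of $G$ induces an isoclinism between $G/N$ and $G/N_0$, we obtain that $K$ is isoclinic to $G/N$, which in turn is isoclinic to $G/N_0$, completing the proof.

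The principal obstacle is the middle step: building $N$ so that $G/N$ is actually isoclinic, not merely similar in its invariants, to $K$. The key is that once $\phi$ is fixed, the commutator pairing of $K$ pulls back to a well defined alternating bilinear form on $G/Z(G)$ which factors uniquely through $c_G$, thereby determining $N$; the isoclinism diagram then commutes tautologically. One should note that the naive assignment $a \mapsto x_1,\; b \mapsto x_2,\; c \mapsto x_3,\; d \mapsto x_4$ typically does not extend to a group homomorphism $G \to K$, since the generators of $K$ may have larger orders than those of $G$, but isoclinism sees only the abelianization and the commutator form, which is precisely the data we have transported.
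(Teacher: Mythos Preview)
Your argument is correct and follows essentially the same strategy as the paper: realize $K$ as a quotient $G/N$ with $G \in \hat{G}_3$ and $|N| = p^2$, then invoke Lemma~\ref{lem7}. The paper's proof simply asserts that $K$ is \emph{isomorphic} to such a quotient for some $G$ in the family (which implicitly requires choosing a $G$ with compatible $p$-th power relations), whereas you more carefully build an \emph{isoclinism} to a quotient of a fixed $G$ via the alternating bilinear form; this is a minor variation, not a different route, and your explicit justification that $Z(G/N) = Z(G)/N$ and that the commutator diagrams match is a welcome clarification of a step the paper leaves to the reader.
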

\begin{proof}
Notice that the group $K$, given  in the statement, is isomorphic to a quotient  of some group $G$ from the family   $\hat{G}_3$ by a subgroup of order $p^2$ contained in $G'$. Now the proof follows from the preceding lemma. \hfill $\Box$

\end{proof}

In particular, if, for an odd prime $p$,  we take a $p$-group $G$ from the class $\hat{G}_{3}$ such that the exponent of $G$ is $p$, then $G$ is isomorphic to the group $G_{3}$ defined in  \eqref{6eqn1} for $r = 3$. Then by Lemma \ref{ext-automorphism}
a bijection between any two minimal generating sets for  $G_{3}$,  extends to an automorphism of $G$. Therefore, on the lines of the proofs of Lemma \ref{lem7} and Corollary \ref{corollary2} (replacing autoclinism by automorphism and isoclinic by isomorphic),  we can prove the following result, which has also been proved by Brahana \cite[Section 2]{Brahana40}. But the proof in the  present text is in modern terminology.

\begin{lemma}\label{4-gen-lemma}
Let $K$ be a  $4$-generator special $p$-group of order $p^{8}$, exponent $p$ and conjugate type $\{1,p^{3}\}$, where $p$ is an odd prime. Then $K$ is isomorphic to $G_{3}/N$, where $G_{3}$ is the group  defined  in  \eqref{6eqn1}  generated by $a,\;b,\;c,\;d$ and $N < Z(G) = G'$ with $|N| = p^2$ is of the form
$$N = \gen{ [a,\;b][c,\;d],\; [a,\;c][b,\;d]^r},\; \mbox{where}\; r\; \mbox{is any fixed non-square integer modulo}\; p.$$
\end{lemma}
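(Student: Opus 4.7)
The plan is to identify $K$ as a quotient $G_3/N$ with $N \le G_3'$ of order $p^2$, and then rerun the proof of Lemma~\ref{lem7} essentially verbatim, using the stronger Lemma~\ref{ext-automorphism} in place of Lemma~\ref{ext-autoclinism}.

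First I would show that $K$ is a quotient of $G_3$. Since $K$ is special it has class $2$; it is $4$-generated and of exponent $p$ by hypothesis. For odd $p$ this realises $K$ as a quotient of the relatively free class-$2$, exponent-$p$ group on $4$ generators, which is precisely $G_3$ (case $r = 3$ of \eqref{6eqn1}). Choosing a minimal generating set $a, b, c, d$ of $K$ and pulling back through the defining relations of $G_3$ gives a surjection $G_3 \onto K$ whose kernel $N$ sits inside $G_3'$, because $G_3/G_3'$ and $K/K'$ are both elementary abelian of rank $4$ and the induced map between them is therefore an isomorphism. Comparing orders $|G_3| = p^{10}$ and $|K| = p^8$ forces $|N| = p^2$, and $N$ is normal since $G_3' \le Z(G_3)$.

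Next I would rerun the argument of Lemma~\ref{lem7} for this $N$. There, each reduction of $N$ through the canonical forms $N_1, N_2, \ldots, N_5$ was induced by a bijection of the minimal generating set $\{a, b, c, d\}$ of $G_3$ onto another minimal generating set (substitutions like $b \mapsto b d^{-i_1}$, $d \mapsto d^{i^{-1}}$, $a \mapsto a^l d^t$, $c \mapsto b^t c^l$, each of which is an invertible $\mathbb{F}_p$-linear map on the Frattini quotient $G_3/G_3' \cong \mathbb{F}_p^4$). In the proof of Lemma~\ref{lem7} these bijections were only lifted to autoclinisms of $G_3$ via Lemma~\ref{ext-autoclinism}, so $G_3/N_i$ and $G_3/N_{i+1}$ were merely isoclinic. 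For $G_3$, however, Lemma~\ref{ext-automorphism} lifts each such bijection to an honest automorphism of $G_3$, which descends to an isomorphism $G_3/N_i \cong G_3/N_{i+1}$. The same chain of reductions therefore produces step by step $K \cong G_3/N_i$ for every intermediate $N_i$, terminating in $N_5 = \langle [a,b][c,d],\; [a,c][b,d]^{i_1}[c,d]^{i_2}\rangle$ with, by the conjugate-type hypothesis, $i_2^2 + 4 i_1$ a non-square modulo $p$. The two final cases $i_2 \ne 0$ and $i_2 = 0$, handled in Lemma~\ref{lem7} by explicit substitutions that are again invertible linear on the Frattini quotient, now yield $K \cong G_3/N$ with $N = \langle [a,b][c,d],\; [a,c][b,d]^r\rangle$.

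The only substantive point to verify is the parenthetical claim above: that each substitution used in the proof of Lemma~\ref{lem7} genuinely maps a minimal generating set of $G_3$ to a minimal generating set, so that Lemma~\ref{ext-automorphism} applies. This is immediate since every such substitution induces an invertible linear transformation on the four-dimensional $\mathbb{F}_p$-space $G_3/G_3'$. Beyond this bookkeeping the proof is a word-for-word transcription of Lemma~\ref{lem7} and Corollary~\ref{corollary2} with ``autoclinism'' replaced by ``automorphism'' and ``isoclinic'' replaced by ``isomorphic''; the main obstacle, such as it is, is merely to confirm that no step of the earlier argument covertly relied on a map that fails to be an automorphism of $G_3$.
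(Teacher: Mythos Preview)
Your proposal is correct and follows essentially the same approach as the paper: the paper states this lemma immediately after Corollary~\ref{corollary2} and simply remarks that it is proved ``on the lines of the proofs of Lemma~\ref{lem7} and Corollary~\ref{corollary2} (replacing autoclinism by automorphism and isoclinic by isomorphic)'', invoking Lemma~\ref{ext-automorphism} in place of Lemma~\ref{ext-autoclinism}. Your write-up spells out exactly this transcription, including the identification of $K$ as $G_3/N$ with $|N|=p^2$ and the check that each generator substitution in Lemma~\ref{lem7} is invertible on the Frattini quotient so that Lemma~\ref{ext-automorphism} applies.
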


Now we consider the family $\hat{\mathcal{G}}_3$ of $2$-groups defined in the introduction. We start with the following result which tells that certain type of quotient groups of any two groups in  $\hat{\mathcal{G}}_3$ are isoclinic.

\begin{lemma}\label{lem8}
Let  $G  = \gen{a,\;b,\;c,\;d}$ and $G^*  = \gen{s,\;u,\;v,\;w}$ be two groups  from the family $\hat{\mathcal{G}}_3$. Then the following hold true.
\begin{enumerate}
\item[(i)] $G$ and $G^*$  are isoclinic.
\item[(ii)] If $M = \gen{ [a,\; b][c,\; d]} \le G'$ and $M^* = \gen{ [s,\; u][v,\; w]} \le (G^*)'$, then $G/M$ and $G^*/M^*$ are isoclinic.
\item[(iii)] If $$N = \gen{[a,\; b][c,\; d], \; [a,\;c][b,\;d][c,\;d]} \le G'$$ and $$N^* = \langle[s,\; u][v,\; w], \; [s,\;v][u,\;w][v,\;w]\rangle \le (G^*)',$$ then $G/N$ are $G^*/N^*$ are isoclinic.
\end{enumerate}
\end{lemma}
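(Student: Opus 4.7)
The plan is to lift the generator correspondence $a \leftrightarrow s$, $b \leftrightarrow u$, $c \leftrightarrow v$, $d \leftrightarrow w$ to an explicit isoclinism pair $(\phi,\theta)$, exploiting the very rigid structure of special class-$2$ groups. First I record the structural constants that drive the whole argument. Any $G \in \hat{\mathcal{G}}_3$ is a $4$-generator special $2$-group of order $2^{10}$, so $Z(G) = G' = \Phi(G)$ is elementary abelian; $G/Z(G)$ is elementary abelian of rank $4$ with basis $\{aZ(G),bZ(G),cZ(G),dZ(G)\}$; and $G'$ has order $2^{6}$, so the six basic commutators $[a,b]$, $[a,c]$, $[a,d]$, $[b,c]$, $[b,d]$, $[c,d]$ (which generate $G'$ because $G$ has class $2$) must, by dimension count, be a basis of $G'$. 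The same holds in $G^*$ with $a,b,c,d$ replaced by $s,u,v,w$.

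For part (i), define $\phi \colon G/Z(G) \to G^*/Z(G^*)$ as the linear extension of $aZ(G)\mapsto sZ(G^*)$, $bZ(G)\mapsto uZ(G^*)$, $cZ(G)\mapsto vZ(G^*)$, $dZ(G)\mapsto wZ(G^*)$, and $\theta \colon G' \to (G^*)'$ as the linear extension of $[a,b]\mapsto[s,u]$, $[a,c]\mapsto[s,v]$, $[a,d]\mapsto[s,w]$, $[b,c]\mapsto[u,v]$, $[b,d]\mapsto[u,w]$, $[c,d]\mapsto[v,w]$. Both are isomorphisms of elementary abelian $2$-groups. Because class $2$ gives $[xy,z]=[x,z][y,z]$ and $[x,yz]=[x,y][x,z]$, the commutator maps $a_G$ and $a_{G^*}$ are bilinear in $G/Z \times G/Z$, so checking $\theta \circ a_G = a_{G^*} \circ (\phi \times \phi)$ reduces to the basis pairs, where it holds by construction.

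For parts (ii) and (iii), the extra input is the identification $Z(G/K) = Z(G)/K$ for $K \in \{M,N\}$. Indeed, every group in $\hat{\mathcal{G}}_3$ is of conjugate type $\{1,8\}$, so $|[x,G]| = |x^G| = 8$ for each $x \in G \setminus Z(G)$; since $|M|=2$ and $|N|=4$, no nontrivial coset $xZ(G)$ can satisfy $[x,G] \subseteq K$, forcing $Z(G/K) = Z(G)/K$. Consequently $(G/K)/Z(G/K) \cong G/Z(G)$ and $(G/K)' = G'/K$ admit the same canonical bases as before, and analogously for $G^*/K^*$. The generator correspondence then induces $\phi$ on the central quotient and the same linear map $\theta$ on commutators. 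To check that $\theta$ descends to $G'/K \to (G^*)'/K^*$, observe that it sends the explicit generators of $M$ and $N$ to those of $M^*$ and $N^*$: $[a,b][c,d] \mapsto [s,u][v,w]$ and $[a,c][b,d][c,d] \mapsto [s,v][u,w][v,w]$. The isoclinism diagram on the quotients then commutes by the same bilinearity argument as in (i).

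I expect the main (and only real) obstacle to be the central-subgroup identification $Z(G/K)=Z(G)/K$; once that is settled by the breadth comparison $|[x,G]| = 8 > |K|$, the rest is bookkeeping driven by bilinearity of the commutator form on the canonical bases of $G/Z(G)$ and $G'$.
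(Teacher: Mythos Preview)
Your proof is correct and follows the same approach as the paper: build $\phi$ and $\theta$ by linearly extending the generator correspondence $a\leftrightarrow s$, $b\leftrightarrow u$, $c\leftrightarrow v$, $d\leftrightarrow w$ and verify the isoclinism diagram via bilinearity of the commutator map in class~$2$. The paper in fact only sketches~(i) and leaves~(ii) and~(iii) to the reader; your breadth argument $|[x,G]|=8>|K|$ to pin down $Z(G/K)=Z(G)/K$, and the observation that $\theta$ carries the listed generators of $M,N$ to those of $M^*,N^*$, are exactly the details needed to complete those parts.
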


\begin{proof}
We sketch  proof only for $(i)$.  Note that both  $G/\Z(G)$ and $G^*/\Z(G^*)$  are elementary abelian $2$-groups of order $2^4$, generated by $\{a\Z(G), b\Z(G), c\Z(G),$ $ d\Z(G) \}$ and $\{s\Z(G^*), u\Z(G^*), v\Z(G^*), w\Z(G^*)\}$ respectively. Similarly, both  $G'$ and $(G^*)'$ are   
elementary abelian $2$-groups generated by the sets consisting of all $6$ basic commutators 
$$\{[a,\;b], [a,\;c], [a,\;d], [b,\;c], [b,\;d], [c,\;d]\}$$ 
and 
$$\{[s,\;u], [s,\;v], [s,\;w], [u,\;v], [u,\;w], [v,\;w]\}$$ 
respectively. Now the  map $a \mapsto s$, $b \mapsto u$, $c \mapsto v$ and $d \mapsto w$ extends to an isomorphism from $G/\Z(G)$ onto $G^*/\Z(G^*)$, which induces an isomorphism from $G'$ onto $(G^*)'$, making $G$ and $G^*$ isoclinic. 
\hfill $\Box$

\end{proof}

We remark that the second and third  assertions of the preceding lemma  hold true in the bigger family $\hat{G}_3$. And by the same argument as given in the proof, one can easily  prove that any two groups in  $\hat{G}_n$ are isoclinic.  We have stated this result  for the family $\hat{\mathcal{G}}_3$ because we here need it only for $2$-groups.

\smallskip
The following lemma is immediate from Corollary \ref{corollary1}, using Lemma \ref{lem8},  when restricted to the family $\hat{\mathcal{G}}_3$.

\begin{lemma}\label{lem9}
Let $K$ be a  $4$-generator  special $2$-group of order  $2^{9}$ and conjugate type $\{1,8\}$. Then $K$ is isoclinic to $G/M$, where $G = \gen{a,\;b,\;c,\;d} \in \hat{\mathcal{G}}_{3}$ is any fixed group  and $M < Z(G) = G'$ with $|M| = 2$ is of the form
$M = \langle [a,\; b][c,\; d]\rangle$.

\end{lemma}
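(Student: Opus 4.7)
The plan is to apply Corollary \ref{corollary1} with parameters $n = 4$ and $p = 2$, and then invoke Lemma \ref{lem8}(ii) to remove the dependence on the particular ambient group from $\hat{\mathcal{G}}_3$ that the corollary produces.

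First I would verify that $K$ satisfies the hypotheses of Corollary \ref{corollary1} in the specialization $n = 4$, $p = 2$: by assumption $K$ is a $4$-generator special $2$-group of order $2^9 = 2^{n(n+1)/2 - 1}$ and conjugate type $\{1, 8\} = \{1, 2^{n-1}\}$, and for $p = 2$ the family $\hat{G}_3$ coincides with $\hat{\mathcal{G}}_3$. Applying the corollary then yields some $G = \gen{a_1, a_2, a_3, a_4} \in \hat{\mathcal{G}}_3$ and a subgroup $M < Z(G) = G'$ of order $2$ of the shape
$$M = \gen{[a_1, a_2][a_3, a_4]\cdots [a_{2m-1}, a_{2m}]},\qquad 2 \le m \le \lfloor n/2 \rfloor,$$
with $K$ isoclinic to $G/M$. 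Since $\lfloor 4/2 \rfloor = 2$, the only admissible value is $m = 2$, forcing $M = \gen{[a_1, a_2][a_3, a_4]}$; relabeling the generators as $(a, b, c, d)$ gives exactly the form $M = \gen{[a,b][c,d]}$ demanded by the statement.

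Next, to show that $G$ may be replaced by any preassigned member of $\hat{\mathcal{G}}_3$, I would fix such a group $G^* = \gen{s, u, v, w}$ and set $M^* = \gen{[s, u][v, w]}$. By Lemma \ref{lem8}(ii), $G^*/M^*$ is isoclinic to $G/M$; composing with the isoclinism $K \sim G/M$ from the first step (isoclinism being an equivalence relation) yields $K \sim G^*/M^*$, which is the desired conclusion.

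The argument is essentially a repackaging of earlier material: the substantive work sits inside Lemma \ref{lem4} (whose proof nowhere uses that $p$ is odd and therefore applies verbatim for $p = 2$) and inside part (ii) of Lemma \ref{lem8}. There is no real obstacle here; the only point worth flagging is the collapse of the range $2 \le m \le \lfloor n/2 \rfloor$ to the single value $m = 2$ when $n = 4$, which is precisely why only one shape of $M$ appears in the statement, in contrast to what would happen for larger values of $n$.
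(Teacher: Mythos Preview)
Your proposal is correct and follows precisely the route the paper intends: the paper declares the lemma ``immediate from Corollary~\ref{corollary1}, using Lemma~\ref{lem8}, when restricted to the family~$\hat{\mathcal{G}}_3$,'' and you have simply unpacked that sentence, including the observation that $2\le m\le\lfloor 4/2\rfloor$ forces $m=2$. There is nothing to add.
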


The following lemma is analogous to Lemma \ref{lem7} for $p=2$, and therefore the proof is mostly a duplication of the the proof of Lemma \ref{lem7} with necessary modifications.

\begin{lemma}\label{lem10}
Let $G = \gen{a, b, c, d} \in \hat{\mathcal{G}}_3$. Then $G/N$ with $|N| = 4$ is of conjugate type $\{1, 8\}$ if and only $N$ can be reduced to the form  $N =  \gen{[a,\; b][c,\; d], \; [a,\;c][b,\;d][c,\;d]}$. 
\end{lemma}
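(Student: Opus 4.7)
The plan is to adapt the proof of Lemma~\ref{lem7} to $p = 2$, keeping the structural autoclinism reductions but replacing the discriminant test (which collapses in characteristic $2$) by an explicit treatment of the four remaining possibilities.

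First I set $\overline{G} = G/N$ and note that $|\overline{G}| = 2^8$, $Z(\overline{G}) = Z(G)/N = (\overline{G})' = \Phi(\overline{G})$ is elementary abelian of order $2^4$, and $[\overline{G} : Z(\overline{G})] = 2^4$. Thus $\overline{G}$ is of conjugate type $\{1, 8\}$ if and only if whenever $\bar x, \bar y \in G/Z(G) \cong \mathbb{F}_2^4$ are linearly independent, $[x, y] \notin N$. Next, I observe that the autoclinisms used in the proof of Lemma~\ref{lem7} are induced by substitutions whose coefficients live in $\mathbb{F}_p^*$; these all make sense at $p = 2$ (where $\mathbb{F}_2^* = \{1\}$ and every needed inverse trivially exists). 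Consequently the same chain of reductions applies: any central subgroup $N$ of order $4$ either contains a pure commutator like $[c, d]$ (in which case $\bar c$ centralises $\bar d$ while $\bar c, \bar d$ are linearly independent, so $G/N$ already fails the required type) or can be autoclinically transformed into
\[N_5 = \gen{[a,b][c,d], \;\; [a,c][b,d]^{i_1}[c,d]^{i_2}}, \qquad i_1, i_2 \in \{0, 1\}.\]

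The heart of the argument is then a case analysis on $(i_1, i_2) \in \{0,1\}^2$. Using the class-$2$ commutator expansion $[x, y] = \prod_{i < \ell} [a_i, a_\ell]^{j_i k_\ell - j_\ell k_i}$ with $(a_1, a_2, a_3, a_4) = (a, b, c, d)$, the condition $[x, y] \in N_5$ for a pair of linearly independent vectors in $\mathbb{F}_2^4$ becomes a small linear system in the coordinates of $\bar x, \bar y$. For $(0, 0)$, $(0, 1)$, and $(1, 0)$ I would exhibit explicit witnesses showing $G/N_5$ fails the required conjugate type: $(x, y) = (a, c)$, $(a, bc)$, and $(ad, bc)$ respectively give commutators $[a,c]$, $[a,b][a,c]$, and $[a,b][a,c][b,d][c,d]$, each of which lies in the corresponding $N_5^{\#}$ by direct calculation. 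For $(i_1, i_2) = (1, 1)$, the linear system has no solution with $\bar x, \bar y$ linearly independent; equivalently, a Pl\"ucker check shows that each of the three non-trivial elements of $N_5$ is non-decomposable in $(G/Z(G)) \wedge (G/Z(G))$, hence not of the form $\bar x \wedge \bar y$. This forces $N = \gen{[a,b][c,d], \; [a,c][b,d][c,d]}$.

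The main obstacle is precisely this last case analysis. Over odd $p$ it was packaged by the elegant statement that $i_2^2 + 4 i_1$ must be a non-square modulo $p$, but at $p = 2$ the Frobenius identity $k^2 \equiv k$ turns the quadratic obstruction $k^2 i_1 - k i_2 - 1 \equiv 0$ into a linear condition, and there are simply not enough units in $\mathbb{F}_2$ to encode the non-square distinction uniformly. So one must check each of the four combinations by hand; only $(1, 1)$ survives, and this dictates the canonical form claimed in the lemma.
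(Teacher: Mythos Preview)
Your proposal is correct and follows essentially the same route as the paper: the same autoclinism reductions bring $N$ to $\langle [a,b][c,d],\,[a,c][b,d]^{i_1}[c,d]^{i_2}\rangle$, the three bad pairs $(i_1,i_2)$ are eliminated by explicit commuting witnesses (the paper uses $(ad^{-i_2},c)$ for $i_1=0$ and $(ad^{-1},bc)$ for $(1,0)$, matching yours), and the surviving case $(1,1)$ is confirmed via the impossible congruence $k^2-k-1\equiv 0\pmod 2$, which is equivalent to your non-decomposability check.
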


\begin{proof}
Notice  that $|G| = 2^{10}$ and $|G'| = 2^6$. Both $G/\Z(G)$ and $G' = \Z(G)$ are elementary abelian.  Set $\overline{G}$ = $G/N$; then $|\overline{G}|$ = $2^8$. Since $G$ is of conjugate type $\{1, 8\}$ and $|N| = 4$, it follows that 
$$ \Z(G)/N = \Z(\overline{G}) = \overline{G}'= \Phi(\overline{G})$$
 is an elementary abelian $2$-group of order $2^4$. Thus $[\overline{G} : \Z(\overline{G})] = 2^4$. Hence $\overline{G}$ is of conjugate type $\{1, 8\}$ if and  only if each non-central element of $\overline{G}$ commutes only with its own powers up to the central elements.

Let $\bar{x},\; \bar{y} \in  \overline{G} - \Z(\overline{G})$ be such that no one is a power of the other (reading modulo $\Z(\overline{G})$). Then it is not difficult to see that  $[x,y] \neq 1$ in $G$. Hence, 
if $[\overline{x} , \overline{y}] =1$ in $\overline{G}$, then  $[x,y] \in N^{\#}$.  Any given central subgroup $N_1$ of order $4$, without loss of generality,   can be written   as one of the following two types:
\begin{eqnarray*}
(i) \;\;\; N_1 &=& \langle [a,\;b][a,\;d]^{i_1}[b,\;c]^{i_2}[b,\;d]^{i_3}[c,\;d]^{i_4},\; [c,\; d] \rangle.\\
(ii) \;\;\; N_1 &=& \langle [a,\;b][a,\;d]^{i_1}[b,\;c]^{i_2}[b,\;d]^{i_3}[c,\;d]^{i_4},\; [a,\;c][a,\;d]^{j_1}[b,\;c]^{j_2}[b,\;d]^{j_3}[c,\;d]^{j_4} \rangle.
\end{eqnarray*}

If $N_1$ is  of type (i), then $\bar{c}$  commutes with $\bar{d}$, although $\overline{c}$ $\notin \langle Z(\overline{G}), \overline{d} \rangle$. Hence $\overline{G}$ can not  be of conjugate type $\{1,p^3\}$. 
Therefore we only need to consider $N_1$ as in type (ii). Now, as done in the proof of  Lemma \ref{lem7}, we can reduce $N_1$ to the form
\begin{equation*}
N_2 = \langle [a,\;b][c,\;d],\; [a,\;c][b,\;d]^{i_1}[c,\;d]^{i_2} \rangle.
\end{equation*}
Here $i_1$ can not be $0$, else $c$ will commute with $ad^{-i_2}$; so $i_1 = 1$. Again $i_2$ can not be $0$, else $ad^{-1}$ will commute with $bc$; so $i_2 = 1$, and hence $N_2 = \langle [a,\;b][c,\;d],\; [a,\;c][b,\;d][c,\;d]\rangle$. 

\smallskip

Now consider $x = a^{j_1}b^{j_2}c^{j_3}d^{j_4}$ and $y = a^{k_1}b^{k_2}c^{k_3}d^{k_4}$ be such that none is power of the other (reading modulo $\Z(G)$). If $[x,\;y] \in N_2^{\#}$, then, on the lines of the proof of Lemma \ref{lem7}, it follows that
$[a,\;b][a,\;c]^k[b,\;d]^j[c,\;d]^{jk}$ $\in N_2^{\#}$, and therefore can be written as a product of powers of generators of $N_2^{\#}$. 

Now comparing powers of the basic commutators, we  get

\begin{equation*}
j \equiv k \;(\mbox{mod}\; 2)\; \mbox{and}\; jk\equiv k + 1\; (\mbox{mod}\; 2)
\end{equation*}
Solving these we  have
\begin{equation*}
k^2 - k - 1 \equiv 0 \;( \mbox{mod}\; 2).
\end{equation*}
This is not possible. Hence no non-central element commutes with other elements except its power (modulo center) in $G / N_2$ if and only if $N_2 = \gen{[a,\;b][c,\;d],\; [a,\;c][b,\;d][c,\;d]}$. \hfill $\Box$

\end{proof}

Now using Lemma \ref{lem8}, the preceding lemma gives
\begin{cor}\label{corollary3}
Let $K$ be a  $4$-generator  special $2$-group of order  $2^{8}$ and conjugate type $\{1,8\}$. Then $K$ is isoclinic to $G/N$, where $G = \gen{a,\;b,\;c,\;d}  \in  \hat{\mathcal{G}}_3$  be any fixed group and $N < Z(G) = G'$ with $|N| = 4$ is of the form
$$N =  \gen{[a,\; b][c,\; d], \; [a,\;c][b,\;d][c,\;d]}.$$
\end{cor}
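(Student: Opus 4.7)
The plan is to mimic the strategy used in the proof of Corollary \ref{corollary2}: realize $K$ as a quotient of some fixed group in $\hat{\mathcal{G}}_3$ by a central subgroup of order $4$, then pin down the shape of that kernel using Lemma \ref{lem10}, and finally transport the conclusion across the family $\hat{\mathcal{G}}_3$ via Lemma \ref{lem8}.

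First I would observe that since $K$ is a $4$-generator special $2$-group of order $2^8$, we have $K/\Z(K)$ elementary abelian of order $2^4$ and $K' = \Z(K) = \Phi(K)$ elementary abelian of order $2^4$. The largest $4$-generator special $2$-group has commutator subgroup of rank $\binom{4}{2}=6$, and the groups in $\hat{\mathcal{G}}_3$ are precisely these maximal objects, of order $2^{10}$. Standard covering arguments (mapping the four generators of some $\tilde{G} = \langle \tilde{a},\tilde{b},\tilde{c},\tilde{d}\rangle \in \hat{\mathcal{G}}_3$ onto a chosen minimal generating set of $K$ and noting that every relation among basic commutators in $K$ lifts to an element of $\tilde{G}'$) therefore exhibit $K$ as isomorphic to $\tilde{G}/\tilde{N}$ for some subgroup $\tilde{N} \le \tilde{G}'$ with $|\tilde{N}| = 4$.

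Since $K$ is of conjugate type $\{1, 8\}$, so is $\tilde{G}/\tilde{N}$. Lemma \ref{lem10} then applies: there is an autoclinism of $\tilde{G}$ under which $\tilde{N}$ is carried to the subgroup
\[
\langle [\tilde{a},\tilde{b}][\tilde{c},\tilde{d}],\; [\tilde{a},\tilde{c}][\tilde{b},\tilde{d}][\tilde{c},\tilde{d}]\rangle.
\]
Because an autoclinism of $\tilde{G}$ induces an isoclinism between the corresponding quotients, $K$ is isoclinic to $\tilde{G}/\tilde{N}^{*}$ where $\tilde{N}^{*}$ has this canonical form.

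To finish, I would invoke Lemma \ref{lem8}(iii), which asserts that for any two groups $\tilde{G}, G$ in $\hat{\mathcal{G}}_3$ with minimal generating sets $\{\tilde{a},\tilde{b},\tilde{c},\tilde{d}\}$ and $\{a,b,c,d\}$, the quotients
\[
\tilde{G}/\langle [\tilde{a},\tilde{b}][\tilde{c},\tilde{d}],\; [\tilde{a},\tilde{c}][\tilde{b},\tilde{d}][\tilde{c},\tilde{d}]\rangle \quad \text{and} \quad G/\langle [a,b][c,d],\; [a,c][b,d][c,d]\rangle
\]
are isoclinic. Combining this with the previous paragraph and transitivity of isoclinism yields the desired conclusion. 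The main obstacle is really just the first paragraph, namely writing $K$ cleanly as a quotient of a group in $\hat{\mathcal{G}}_3$; once this is in hand, Lemmas \ref{lem10} and \ref{lem8} dispatch everything else immediately.
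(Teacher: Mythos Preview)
Your proposal is correct and follows essentially the same route as the paper: the paper simply states that the corollary follows from Lemma~\ref{lem10} together with Lemma~\ref{lem8}, which is exactly your argument (realize $K$ as a quotient of some $\tilde{G}\in\hat{\mathcal{G}}_3$ by a central subgroup of order $4$, normalize the kernel via Lemma~\ref{lem10}, and then pass to the fixed $G$ via Lemma~\ref{lem8}(iii)). Your write-up merely makes explicit the covering step that the paper leaves implicit, in parallel with the proof of Corollary~\ref{corollary2}.
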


We conclude this section with the following result which is valid only for odd primes.
\begin{lemma}\label{expresult}
Every isoclinism family of finite p-groups of nilpotency class $2$ and  conjugate type $\{1, p^n\}$ contains a group of exponent $p$, where $p$ is an odd prime.
\end{lemma}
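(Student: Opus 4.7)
\emph{Plan.} Given $G$ of class $2$ and conjugate type $\{1,p^n\}$, I will produce an exponent-$p$ representative of its isoclinism family by passing to a stem group and performing a Baer cocycle twist, exploiting that $p$ is odd so that $2$ is invertible in $\mathbb{F}_p$.

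First, apply Proposition \ref{prop5} to replace $G$ by an isoclinic stem group $H$ with $\Z(H)\le H'$; together with nilpotency class $2$ this gives $\Z(H)=H'$. By Proposition \ref{prop4}, $H$ has conjugate type $\{1,p^n\}$, and then Corollary \ref{lem2} forces both $V:=H/\Z(H)$ and $W:=H'$ to be elementary abelian; that is, $H$ is a special $p$-group. The commutator in $H$ descends to an alternating $\mathbb{F}_p$-bilinear map $c\colon V\times V\to W$ which is non-degenerate in the first variable (any $v\in V$ with $c(v,\cdot)=0$ lifts to a central element, forcing $v=0$) and surjective (its image generates $H'=W$).

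Next, let $\lambda\in\mathbb{F}_p$ be the inverse of $2$ and define a new group $H^{\ast}$ on the set $V\times W$ by
\[
(v_1,w_1)(v_2,w_2)\;=\;\bigl(v_1+v_2,\;w_1+w_2+\lambda\, c(v_1,v_2)\bigr).
\]
Bilinearity of $c$ makes $\lambda c$ a normalised $\mathbb{F}_p$-bilinear $2$-cocycle, so associativity, identity $(0,0)$ and inverses $(v,w)^{-1}=(-v,-w)$ are automatic. A short induction, using that $c$ is alternating and hence $c(v,iv)=0$, yields $(v,w)^k=(kv,kw)$ for every $k$; in particular $(v,w)^p=(0,0)$, so $H^{\ast}$ has exponent $p$. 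Computing directly,
\[
[(v_1,w_1),(v_2,w_2)] \;=\; (0,\;2\lambda\, c(v_1,v_2)) \;=\; (0,\,c(v_1,v_2)),
\]
which together with non-degeneracy gives $\Z(H^{\ast})=(H^{\ast})'=\{0\}\times W$. The obvious identifications $V\xrightarrow{\sim} H^{\ast}/\Z(H^{\ast})$ and $W\xrightarrow{\sim}(H^{\ast})'$ then make the commutator squares for $H$ and $H^{\ast}$ commute, i.e.\ they form an isoclinism. By transitivity of isoclinism, $H^{\ast}$ lies in the isoclinism family of $G$, proving the lemma.

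The only essential use of the hypothesis is the existence of $\lambda=2^{-1}\in\mathbb{F}_p$, which allows the symmetric cocycle $\lambda c$ to recover the commutator $c$ while simultaneously annihilating all $p$-th powers. This is the sole conceptual obstacle; everything else is a routine verification. The argument visibly breaks at $p=2$, consistent with the fact that the analogous statement fails there (for instance, every $2$-group isoclinic to $Q_8$ contains an element of order $4$).
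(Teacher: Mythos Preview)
Your proof is correct and follows a genuinely different route from the paper's. Both arguments first pass to a stem group $H$, which is special, and then construct an exponent-$p$ group with the same central quotient and the same commutator map. The paper does this presentation-theoretically: it writes down a presentation for $H$, drops the relations $x_i^p=\prod[x_j,x_k]^{c_{ijk}}$ that encode the $p$-th power map, and then kills all $p$-th powers in the resulting group $\overline{F}$; since the commutator relations are untouched, the quotient $\overline{F}/\overline{F}^{\,p}$ is isoclinic to $H$ and has exponent $p$. Your argument instead extracts the alternating $\mathbb{F}_p$-bilinear commutator form $c\colon V\times V\to W$ and rebuilds a group directly on $V\times W$ via the cocycle $\tfrac{1}{2}c$, which simultaneously forces exponent $p$ (because $c(v,v)=0$) and reproduces the commutator $c$ (because $2\cdot\tfrac{1}{2}=1$). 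Your construction is more explicit and self-contained, and it makes transparent exactly where $p>2$ enters (invertibility of $2$), whereas the paper's argument is terser but leans on the reader to check that $\overline{F}/\overline{F}^{\,p}$ really has the asserted commutator structure---a check which itself tacitly uses that $(xy)^p=x^py^p$ in class $2$ when $p$ is odd.
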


\begin{proof}
Notice that the isoclinism family of a  finite p-group $G$ of nilpotency class $2$ and  conjugate type $\{1, p^n\}$ contains a special $p$-group $H$ (say).  Then $H$ has the following presentation.
\begin{equation*}
\begin{split}
H=\Big{\langle} x_1,x_2,\ldots, x_d \colon & [x_i,x_j,x_k]=1,  [x_i,x_j]^p=1,\\
 & x_i^p=\prod_{j<k} [x_j,x_k]^{c_{ijk}},  \prod_{j<k} [x_j,x_k]^{d_{ljk}}=1\Big{\rangle},
\end{split}
\end{equation*}
where $c_{ijk}, d_{ljk}\in\mathbb{Z}$.  Let $F/R$ be a free presentation of $H$, and $R_1$ denote the subgroup of $R$ which is the normal closure of $\{[x_i,x_j,x_k], [x_i,x_j]^p, \prod_{j<k} [x_j,x_k]^{d_{ljk}}\}$ in $F$. Let $\overline{F} := F/R_1$. Then the group $\overline{F}/\overline{F}^p$ lies in the isoclinism class of $G$ and is of exponent $p$. \hfill $\Box$

\end{proof}

\section{Proof of Theorems~\ref{thm1} and \ref{thm2}}

We are now ready to prove our main results.\\

\noindent {\it Proof of Theorem \ref{thm1}.}
Let $G$ be a finite $p$-group of conjugate type $\{1,p^3\}$, $p > 2$. Then by  \cite[Main Theorem]{Ishikawa2002}, $G$ can be of nilpotency class $2$ or $3$. Without loss of any generality, we can always assume that $\Z(G) \le G'$. First assume that  $G$ is of class $3$. We are going to show that this case can not occur, and therefore $G$ must have nilpotency class $2$.
 
By Proposition \ref{prop7}, $|Z(G)|\geq p^3$ and  $Z(G) < G'$; so $|G'| \geq p^4$. Then it follows from  Lemma \ref{lem3} that $[G : Z(G)] = p^4$. Since  $Z(G) < G'$; we have $[G : G'] \leq p^3 $. But, if $[G : G'] \leq p^2 $, then $G$ can be minimally  generated by at most $2$ elements, which  contradicts
Proposition \ref{prop7}. Thus $\mid G : G' \mid = p^3 $ and minimal generating set for $G$ has exactly $3$ elements. Assume that  $G$ = $\langle a,b,c \rangle $.

 Now we have $[G : Z(G)] = p^4$ and $[G : G'] = p^3 $. So, at least one of the three commutators $[a,\;b], [a,\;c]$ and $[b,\;c]$ lies outside center. 
By the symmetry, we can assume that  $[a,\;b] \in G' - Z(G)$. Set  $[a,\;b] = \alpha$. Then clearly $G'$ = $\langle \alpha, Z(G)\rangle$. So there exist integers $i_1$ 
and $i_2$ such that
\begin{equation*}
[a,\;c] = [a,\;b]^{i_1}\beta_1,\; \mbox{where}\; \beta_1\in Z(G)
\end{equation*}
and
\begin{equation*}
[b,\;c] = [a,\;b]^{i_2}\beta_2,\; \mbox{where}\; \beta_2\in Z(G).
\end{equation*}
Replacing $c$ by $a^{i_2}b^{-i_1}c$, we  get $[a,\;c], [b,\;c] \in Z(G)$. Then $[\alpha,\;c] = 1$ (by Lemma \ref{lem1}). An arbitrary element of $G$ can be written 
as $g = a^{j_1}b^{j_2}c^{j_3}\alpha_1^{j_4}z$; where $z \in G' = Z(G)$ and $0\leq j_k \leq p-1$ for $k = 1,2,3,4$. Then
\begin{equation*}
\alpha^G =  \{ (a^{j_1}b^{j_2})^{-1}\alpha \;a^{j_1}b^{j_2} \mid  0\leq j_k \leq p-1,\;  k = 1, 2\}.
\end{equation*}  
Thus $|\alpha^G|$ $\leq p^2$, which contradicts the fact that $G$ is of conjugate type $\{1,p^3\}$. Hence the nilpotency class of $G$ must be $2$.

Now onward we assume that the nilpotency class of $G$ is $2$. Recall that  $G/Z(G)$ and $G'$ are elementary abelian p-groups (by Lemma \ref{lem2}). By our assumption that $Z(G) \le G'$,  we have $Z(G) = G' = \Phi(G)$. By Proposition \ref{prop4} 
and Proposition \ref{prop5}, we have $p^3 \leq  |Z(G)| = |G'| \leq p^6$. Thus, by Lemma \ref{lem3}, there can  be two possibilities, namely
\begin{enumerate}[(i)]
 \item[$(i)$] $|G'|$ = $p^3$ and $[G : Z(G)] \geq p^4 $ or 
 \item[$(ii)$] $|G'| \geq p^4$ and $[G : Z(G] = p^4$.
\end{enumerate}
In case (i), $G$ is a Camina group with $|G'|$ = $p^3$. 

\smallskip
So it remains to consider  case (ii) only.
In this case,  we have  $ p^4 \leq |G'| = |Z(G)| \leq p^6$ and $[G : Z(G)] = p^4 = [G : G'] = [G: \Phi(G)]$. Thus $G/ {\Phi(G)}$ is an elementary abelian 
$p$-group of order $p^4$. Hence $G$ is minimally generated by $4$ elements. By Lemma \ref{expresult} we can assume $G$ to be of exponent $p$ upto isoclinism. 
Thus $G$ is isoclinic to $G_3$ or to  a central quotient $G_3/H$, where $H$ is a non-trivial central subgroup of $G_3$  with $|H| \leq p^2$. Hence the order 
of $H$ is  either $p$ or $p^2$.  Proof of the theorem is now complete by Corollary  \ref{lem6} and Lemma \ref{4-gen-lemma}. \hfill $\Box$

\smallskip
Before proceeding to the proof of Theorem \ref{thm2}, we state the following result which is a consequence of the main result of Wilkens \cite{Wilkens07} stated on pages $203 - 204$.

\begin{theorem}\label{Wilkens-thm}
Let $G$ be a finite $2$-group of nilpotency class $2$ and conjugate type $\{1, 8\}$. Then one of the following holds:
\begin{enumerate}
 \item[(i)] $|G'| = 2^3$. 
 \item[(ii)] $[G : \Z(G)] = 2^4 $.
 \item[(iii)] $|G'| = 2^4$ and there exists $R$ with $R \leq \Omega_1(\Z(G))$ and $|R| = 2$ such that $|G/R : \Z(G/R)| = 2^3$.
 \item[(iv)] $|G'| = 2^4$ and $G$ is central product $HC_G(H)$, where $C_G(H)$ is abelian and $H$ is  the group given as  follows:
 
  There are $i$, $j$, $k$, $l$ and $m$ $\in \mathbb{N}$ such that $H \cong$ $\tilde{H}/\gen{x^{2i}, v^{2j}, v_1^{2k}, v_2^{2l}, v_3^{2m}}$, where $\tilde{H} = \gen{x, v, v_1, v_2, v_3 }$ is of class $2$ with $\Phi(\tilde{H})
  \leq \Z(\tilde{H})$ and is otherwise defined by $[v_2, x] = [v_1, v] = [v_3, x][v_3, v] = 1$, $[v_i, v_j] \in \gen{[v_3, x]}$.
\end{enumerate} 
\end{theorem}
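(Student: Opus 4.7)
The plan is to derive this theorem as a specialisation of the main classification result for finite $2$-groups of breadth $3$ from \cite{Wilkens07}. First observe that the conjugate type $\{1, 8\}$ hypothesis gives $b(G) = 3$, since every non-central $x \in G$ satisfies $|x^G| = [G : C_G(x)] = 2^3$ and the only other class size is $1$. So Wilkens' theorem applies to $G$, and we need only cull the cases of it that are incompatible with nilpotency class $2$.

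To organise the extraction, I would split on $|G'|$, mirroring the Parmeggiani--Stellmacher dichotomy recorded as Proposition \ref{prop8}. Proposition \ref{prop6} bounds $|G'| \le 2^{6}$, and since any non-central $x$ has $C_G(x) \supsetneq Z(G)$ one gets $[G : Z(G)] \ge 2^4$. The four cases in the target statement should then be matched as follows. Cases (i) and (ii) are the direct analogues of parts (i) and (iii) of Proposition \ref{prop8}: when $|G'| = 2^3$ we land in (i), and when $[G : Z(G)] = 2^4$ we land in (ii), with $|G'|$ unrestricted up to $2^{6}$. Case (iii) is the analogue of part (ii) of Proposition \ref{prop8}, with the central involution $R$ playing the role of the normal subgroup of order $p$ whose quotient drops $[G : Z(G)]$ down to $2^3$; the hypothesis $R \le \Omega_1(\Z(G))$ is automatic in the class-$2$ setting because Corollary \ref{lem2} (applied to a stem group in the isoclinism family) makes $\Z(G)$ elementary abelian.

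The genuinely new phenomenon at $p = 2$ is case (iv), which has no analogue in Proposition \ref{prop8}. Here $|G'| = 2^4$, no quotient by a central involution can collapse $[G : \Z(G)]$ to $2^3$, and Wilkens shows that $G$ must instead decompose as a central product $H\,C_G(H)$ with $C_G(H)$ abelian and $H$ a quotient of the explicit five-generator class-$2$ group $\tilde H$ described in the statement. The main obstacle is precisely the identification of this $H$ inside $G$: one has to locate elements $x, v, v_1, v_2, v_3$ whose commutator pattern reproduces the defining relations of $\tilde H$, namely the degeneracies $[v_2, x] = [v_1, v] = 1$ and the diagonal relation $[v_3, x][v_3, v] = 1$, and then check that the complementary factor $C_G(H)$ is abelian so that the central product closes up. Rather than reproduce Wilkens' argument for this, the cleanest route is to invoke his theorem wholesale and verify that the combined hypotheses of class $2$ and conjugate type $\{1, 8\}$ pick out exactly the four alternatives (i)--(iv).
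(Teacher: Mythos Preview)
Your proposal is correct and takes essentially the same approach as the paper: the paper does not give an independent proof of this theorem at all but simply states it as ``a consequence of the main result of Wilkens \cite{Wilkens07} stated on pages 203--204,'' which is exactly your strategy of observing that $b(G)=3$ and reading off the class-$2$ alternatives from Wilkens' classification. Your outline in fact supplies more detail than the paper does on how the four cases line up with the Parmeggiani--Stellmacher trichotomy and where the extra case (iv) enters.
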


We are now  ready for the final proof. 

\noindent {\it Proof of Theorem \ref{thm2}.}
Let $\mathcal{G}$ be a finite $2$-group of nilpotency class $2$ and conjugate type $\{1, 8\}$. Then $\mathcal{G}$ is isomorphic to one of the groups  $G$ in (i), (ii), (iii) and (iv) of the preceding theorem. We are going to show that third and fourth possibilities can not occur. Suppose that (iii) occurs. Then $|G'| = 2^4$ and there exists $R$ with $R \leq \Omega_1(Z(G))$ and $|R| = 2$ such that $|G/R : \Z(G/R)| = 2^3$. Since $G$ is of conjugate type $\{1, 8 \}$ and $|R| = 2$, we have  $\Z(G/R) = \Z(G)/R$. Then from the fact that $|G/R : Z(G/R)| = 2^3$, we  get $|G : \Z(G)| = 2^3$, which  contradicts our hypothesis that $G$ is of conjugate type $\{1, 8 \}$.

Next consider the case (iv)(5).  So $G \cong HC_G(H)$, where $\tilde{H}$ is a $2$-group of class $2$. It is easy to see that the conjugacy class of the image of $v_3$ in $H$ is of lenght at most $2$. Hence $H$ is not of conjugate type $\{1, 8\}$ and so is for $G$. So we are left with only two cases (i) and (ii).

In case  (i),   $|G'| = 2^3$, which forces  $\mathcal{G}$ to be   isoclinic to  a  Camina $2$-group with commutator subgroup of order $8$. 

Finally we consider the case  (ii). For any group $G$, in this case, we have  $[G : \Z(G)] = 2^4$. Since $G$ is of conjugate type $\{1, 8\}$, we have  $[G : C_G(x)] = 2^3$, and consequently $[C_G(x) : \Z(G)] = 2$ for all $x \in G-\Z(G)$. Hence for all $x \in G-\Z(G)$, $x^2 \in \Z(G)$, 
i.e., $G/\Z(G)$ is an elementary abelian $2$-group. Thus $G' \leq \Z(G)$, and therefore  $G$ is of class $2$. 

By Proposition \ref{prop5} we can assume  $Z(G) = G'$. By Proposition \ref{prop6}, $|G'| = |Z(G)| \leq 2^6$. Since,   $G$ being conjugate type $\{1, 8\}$,  $|G'| \geq 2^3$, it follows that $2^7 \leq |G| = 2^{10}$. Since $G$ is 
of class $2$, obviously $G' = Z(G)$ is  elementary abelian. Therefore the exponent of $G$ is $4$. If $|G| = 2^7$, then $G$ is a Camina group, which is not possible by \cite[Theorem 3.2]{Macdonald1981}. Hence $2^8 \le |G| \le 2^{10}$, and therefore $G$ must be isomorphic to some group $T$ in the family $\hat{\mathcal{G}}$  or its central quotient $T/K$ with $|K| \leq 4$. Now the proof is complete by  Lemmas \ref{lem8}, \ref{lem9} and Corollary \ref{corollary3}. \hfill $\Box$

\hspace{.4in}

\noindent{\bf Acknowledgements.} The authors thank Prof. Mike Newman for various useful suggestions and comments. Reference \cite{Brahana40}  was brought to the attention of the authors by him.

\end{document}